\theoremstyle{plain}
    \newtheorem{thm}{Theorem}[section]
    \newtheorem{lem}[thm]   {Lemma}
    \newtheorem{cor}[thm]   {Corollary}
    \newtheorem{prop}[thm]  {Proposition}
\theoremstyle{definition}
    \newtheorem{defn}[thm]  {Definition}
    \newtheorem{ex}[thm]{Example}
    \newtheorem{rem}[thm]{Remark}
  \newtheorem*{thm*}{Theorem}
  \newtheorem*{prop*}{Proposition}
\def\Hom{\mathrm{Hom}}
\def\cat{\mathsf{cat}}
\def\secat{\mathsf{secat}}
\def\genus{\mathsf{genus}}
\def\dim{\mathrm{dim}}
\def\ker{\mathrm{ker}}
\def\tto{\twoheadrightarrow}
\newcommand{\be}{\begin{enumerate}}
\newcommand{\ee}{\end{enumerate}}
\newcommand{\R}{\mathbb{R}}
\newcommand{\Z}{\mathbb{Z}}
\newcommand{\C}{\mathbb{C}}
\newcommand{\TC}{{\sf TC}}
\newcommand{\cld}{{\sf cd}}
\begin{document}

\title[Parametrised TC of epimorphisms]{Parametrised topological complexity of group epimorphisms}

\author{Mark Grant}

\address{Institute of Mathematics,
Fraser Noble Building,
University of Aberdeen,
Aberdeen AB24 3UE,
UK}

\email{mark.grant@abdn.ac.uk}

\date{\today}

\keywords{parametrised topological complexity, aspherical spaces, group epimorphisms}
\subjclass[2010]{55M30 (Primary); 55P20, 20J06 (Secondary).}

\begin{abstract} We show that the parametrised topological complexity of Cohen, Farber and Weinberger gives an invariant of group epimorphisms. We extend various bounds for the topological complexity of groups to obtain bounds for the parametrised topological complexity of epimorphisms. Several applications are given, including an alternative computation of the parametrised topological complexity of the planar Fadell--Neuwirth fibrations which avoids calculations involving cup products. We also prove a homotopy invariance result for parametrised topological complexity of fibrations over different bases. 
\end{abstract}


\maketitle
\section{Introduction}\label{sec:intro}

The topological complexity $\TC(X)$ of a space $X$ is defined to be the Schwarz genus, or sectional category, of the endpoint fibration $\pi:X^I\to X\times X$ which sends a free path to its pair of endpoints. This notion due to Michael Farber \cite{Far} is of potential applicability in the field of robotics, since sections of $\pi$ correspond to motion planning algorithms for mechanical systems with $X$ as their space of configurations.

One potential obstacle to applying $\TC(X)$ to robotics problems is that it assumes that the configuration space is known in advance. In order to address this, Cohen, Farber and Weinberger \cite{CFW1,CFW2} have introduced a parametrised version of topological complexity, which models motion planning problems for which the configuration space varies within a fixed homotopy type according to some space of parameters. Briefly, let $p:E\to B$ be a surjective fibration with path-connected fibre $X$. Letting $E^I_B$ denote the space of paths in $E$ with image in a single fibre of $p$, we have a parametrised endpoint fibration $\Pi:E^I_B\to E\times_B E$ which again sends a path to its pair of endpoints. Then the parametrised topological complexity of $p:E\to B$, denoted $\TC[p:E\to B]$ (or $\TC_B(X)$ if the role of the space $X$ is to be emphasized), is defined to be the sectional category of $\Pi$. Further details will be given in Section 2 below. 

As well as generalizing the topological complexity, the parametrised topological complexity has interesting mathematical properties. The papers \cite{CFW1, CFW2} compute the parametrised topological complexity of the Fadell--Neuwirth fibrations $p:F(\R^d,m+n)\to F(\R^d,m)$, which model the motion planning problem of $n$ agents moving in Euclidean space avoiding collisions with each other and with $m$ obstacles, whose positions are \emph{a priori} unknown. These calculations show that $\TC_B(X)$ can exceed $\TC(X)$ by an arbitrary amount. The paper \cite{G-C} by Garc\'{i}a-Calcines considers this new invariant from the point of view of fibrewise topology.

Any homotopy invariant of spaces gives an invariant of discrete groups via the correspondence between groups and homotopy $1$-types. Farber \cite{FarSurvey} posed the problem of describing $\TC(\pi):=\TC(K(\pi,1))$ in terms of other algebraic invariants of the group $\pi$. This problem has stimulated a great deal of research (see \cite{FGLO, FarMes,Dranish} for recent examples) but remains unanswered. By contrast, for the related invariant Lusternik--Schnirelmann category the corresponding problem has a simple solution after ground-breaking work of Eilenberg and Ganea \cite{EG}, Stallings \cite{Sta} and Swan \cite{Swa}: one has $\cat(\pi):=\cat(K(\pi,1))=\cld(\pi)$, the cohomological dimension of the group.

In Section 3 we define the parametrised topological complexity $\TC[\rho:G\tto Q]$ of an epimorphism of discrete groups, and show that it agrees with the Cohen--Farber--Weinberger definition applied to aspherical fibrations. Although our definition
\[
\TC[\rho:G\tto Q]:=\secat(\Delta:G\to G\times_Q G),
\]
where $\Delta(g)=(g,g)$ is the diagonal homomorphism into the fibred product, makes sense for arbitrary homomorphisms, we restrict our attention to epimorphisms since these correspond to fibrations of aspherical spaces with path-connected fibre. 

Since the parametrised topological complexity of epimorphisms generalises the topological complexity of groups, we do not expect or seek a purely algebraic description. We show here however that certain bounds for the topological complexity of groups given by the author, Lupton and Oprea \cite{GLO} and the author \cite{Grant} admit generalizations to the parametrised setting. These bounds for $\TC[\rho:G\tto Q]$ depend only on the cohomological dimensions of various auxiliary groups, and do not require the calculation of cup products. In particular, in Sections 4 and 5 respectively we prove:

\begin{thm*}
Let $\rho:G\tto Q$ be an epimorphism with kernel $N$. Given subgroups $A,B\leq G$ such that $gAg^{-1}\cap B=\{1\}$ for all $g\in N$, we have
\[
\cld(A\times_Q B)\leq \TC[\rho:G\tto Q],
\]
where $A\times_Q B=\{(a,b)\in A\times B \mid \rho(a)=\rho(b)\}$ is the fibred product.
\end{thm*}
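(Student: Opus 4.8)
The plan is to establish this lower bound by producing a cohomology class in the appropriate dimension that does not vanish under the relevant restriction, thereby witnessing that the sectional category is at least the cohomological dimension of $A\times_Q B$. This mirrors the standard strategy for lower bounds on sectional category via zero-divisor cup-lengths, but the innovation here (following \cite{GLO, Grant}) is to bypass explicit cup-product calculations by a dimension-counting argument. The key observation is that the hypothesis $gAg^{-1}\cap B=\{1\}$ for all $g\in N$ ensures that the subgroup $A\times_Q B$ sits inside $G\times_Q G$ in a way compatible with the diagonal, so that restricting the motion-planning problem to this subgroup yields a situation with no sections below a certain dimension.

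First I would pass from the algebraic statement to the topological one, using the identification (established earlier in the excerpt, Section 3) of $\TC[\rho:G\tto Q]$ with $\secat(\Pi)$ for the parametrised endpoint fibration of an aspherical fibration realising $\rho$. I would then consider the classifying space $K(A\times_Q B,1)$ and a map into $E\times_B E$ (the fibred product of the total space) induced by the inclusion $A\times_Q B\hookrightarrow G\times_Q G$. The crucial point is that this map, composed with the parametrised endpoint fibration, admits a section if and only if the fibration restricted over the image is trivial in the relevant sense; the collision-freeness hypothesis $gAg^{-1}\cap B=\{1\}$ is precisely what guarantees that the pullback of the fibre $X$ over $K(A\times_Q B,1)$ has a certain nontriviality. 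Concretely, I expect the hypothesis translates into the statement that the two maps $A\times_Q B\to G$ (projection to each factor, followed by inclusion) differ by an element that never lies in a conjugate intersection, forcing the relevant homotopy classes of sections to be obstructed.

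The main technical step, and the one I expect to be the principal obstacle, is to show that $\cld(A\times_Q B)$ many cohomology classes can be assembled into a nonzero product that restricts nontrivially along the section-detecting map. In the unparametrised setting of \cite{GLO}, one uses that the kernel of the map $H^*(G\times G)\to H^*(G)$ induced by the diagonal contains a canonical class whose top power detects cohomological dimension; here I would need the analogous class in $H^*(G\times_Q G)$ relative to the fibred diagonal, and I would need to verify that its restriction to $H^*(A\times_Q B)$ remains a nonzero element in degree $\cld(A\times_Q B)$. The hypothesis on conjugates is what ensures this restricted class is a genuine zero-divisor that does not die; making this precise will likely require a Serre spectral sequence or direct cochain-level argument comparing the fibred products, and care must be taken because $A\times_Q B$ need not be a product, so its cohomological dimension is not simply additive.

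Finally, I would invoke the standard inequality that sectional category is bounded below by the cup-length of zero-divisors (or more precisely its weight/dimension refinement), concluding that $\secat(\Delta:G\to G\times_Q G)\geq \cld(A\times_Q B)$. I anticipate that the formal structure closely parallels \cite[Theorem]{GLO} for ordinary $\TC$, with the fibred product $G\times_Q G$ playing the role that $G\times G$ plays there, and the condition $gAg^{-1}\cap B=\{1\}$ for $g\in N$ replacing the condition $gAg^{-1}\cap B=\{1\}$ for all $g\in G$; the restriction to $g\in N=\ker\rho$ is exactly the relaxation afforded by working fibrewise over $Q$.
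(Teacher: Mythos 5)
Your setup coincides with the paper's: realise $\rho$ by an aspherical fibration $p:E\to B$, map a $K(A\times_Q B,1)$ space $Y$ into $E\times_B E$ along the inclusion $A\times_Q B\hookrightarrow G\times_Q G$, and pull back the parametrised endpoint fibration to get $q:D\to Y$ with $\secat(q)\leq\TC[\rho:G\tto Q]$. But from there your strategy diverges, and the divergence is where the gap lies. You propose to finish with the zero-divisor cup-length lower bound: assemble $\cld(A\times_Q B)$ classes in the kernel of $\Delta^*:H^*(G\times_Q G)\to H^*(G)$ whose product restricts nontrivially to $H^*(A\times_Q B)$ in top degree. There is no reason such classes exist. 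Cohomological dimension $n=\cld(A\times_Q B)$ is witnessed by a single nonzero class in $H^n(A\times_Q B;M)$ for some (typically nontrivial) coefficient module $M$; in general this class is not a product of lower-degree classes, let alone a product of restrictions of zero-divisors from $H^*(G\times_Q G)$. You also mischaracterise \cite{GLO}: the proof there is not a cup-product argument, and the stated purpose of that bound is precisely to apply in situations where zero-divisor cup-length is too weak. Your own first paragraph says the innovation is to bypass cup products, yet your third and fourth paragraphs reinstate them; the ``principal obstacle'' you flag is not a technical step awaiting completion but an approach that fails in general.

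The paper's actual mechanism is homotopy-theoretic, not cohomological. Since $Y$ is aspherical, $\cat(Y)$ equals the $1$-dimensional category $\cat_1(Y)$, so it suffices to show that any open $U\subseteq Y$ over which $q$ admits a homotopy section is $1$-categorical, which by \cite[Lemma 5.3]{GLO} reduces to showing every composite $S^1\to U\hookrightarrow Y$ is null-homotopic. Applying $[S^1,-]$, i.e.\ passing to conjugacy classes of fundamental group elements: a loop whose class lies in the image of $q_*$ has $\psi_*$-image in the image of $d_*$, where $d:E\to E\times_B E$ is the diagonal; group-theoretically, its class $(a,b)\in A\times_Q B$ is conjugate in $G\times_Q G$ to a diagonal element, so $kak^{-1}=\ell b\ell^{-1}$ for some $(k,\ell)\in G\times_Q G$. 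Then $g:=\ell^{-1}k$ lies in $N$ because $\rho(k)=\rho(\ell)$, and $gag^{-1}=b$, so the hypothesis forces $a=b=1$ and the loop is null-homotopic. This conjugacy computation is exactly where the hypothesis on conjugation by elements of $N$ (rather than all of $G$) does its work --- the point your sketch leaves as a vague ``nontriviality'' of the pullback --- and it requires no spectral sequence, no cochain-level argument, and no cup products. Hence $\cld(A\times_Q B)=\cat(Y)=\cat_1(Y)\leq\secat(q)\leq\TC[\rho:G\tto Q]$.
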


\begin{thm*}  
Let $\rho:G\tto Q$ be an epimorphism with kernel $N$. Let $H\lhd G$ be a normal subgroup with $[H,N]=1$. Then $\Delta(H)$ is normal in $G\times_Q G$, and
\[
\TC[\rho:G\tto Q]\leq \cld\left(\frac{G\times_Q G}{\Delta(H)}\right).
\]
\end{thm*}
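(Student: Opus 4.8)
The plan is to establish the normality claim by a short commutator computation, and then to recognise $B\Delta$ as a \emph{pullback} of a map whose target is $B\Gamma$, where I write $P:=G\times_Q G$ and $\Gamma:=P/\Delta(H)$. The bound will then drop out of three standard facts: that pullback does not increase sectional category, that $\secat$ of a map is bounded by the Lusternik--Schnirelmann category of its target, and that $\cat(B\Gamma)=\cld(\Gamma)$ for the aspherical space $B\Gamma$, as recalled in the introduction.

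For normality, take $(g_1,g_2)\in P$, so that $\rho(g_1)=\rho(g_2)$ and hence $n:=g_2^{-1}g_1\in N$. For $h\in H$ one has
\[
(g_1,g_2)\,\Delta(h)\,(g_1,g_2)^{-1}=\bigl(g_1hg_1^{-1},\,g_2hg_2^{-1}\bigr).
\]
Both coordinates lie in $H$ because $H\lhd G$, and they coincide exactly when $nhn^{-1}=h$, i.e. when $[n,h]=1$; since $n\in N$ and $[H,N]=1$ this holds, so the conjugate equals $\Delta(g_1hg_1^{-1})\in\Delta(H)$. Thus $\Delta(H)\lhd P$ and the quotient $q\colon P\tto\Gamma$ is defined.

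Next I would compare two group extensions. The hypotheses give short exact sequences $1\to H\to G\to G/H\to 1$ and $1\to\Delta(H)\to P\xrightarrow{q}\Gamma\to 1$, and $\Delta$ carries the first extension to the second, inducing a homomorphism $\bar\Delta\colon G/H\to\Gamma$ compatible with the quotient maps. Passing to classifying spaces produces a commutative square with top edge $B\Delta\colon BG\to BP$, bottom edge $B\bar\Delta\colon B(G/H)\to B\Gamma$, and vertical fibrations $BG\to B(G/H)$ and $Bq\colon BP\to B\Gamma$. The crucial observation is that this square is a homotopy pullback: the homotopy fibres of the two vertical fibrations are $BH$ and $B\Delta(H)$, and the map between them induced by $B\Delta$ is $B(\Delta|_H)$, which is a homotopy equivalence because $\Delta\colon H\to\Delta(H)$ is an isomorphism. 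Consequently $B\Delta$ is the pullback of $B\bar\Delta$ along $Bq$.

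The estimate then assembles itself: using $\TC[\rho\colon G\tto Q]=\secat(B\Delta)$ together with monotonicity under pullback and the bound by the category of the target,
\[
\TC[\rho\colon G\tto Q]=\secat(B\Delta)\le\secat(B\bar\Delta)\le\cat(B\Gamma)=\cld(\Gamma),
\]
which is the assertion. The main obstacle is the homotopy-pullback step: one must verify that the two rows are genuine fibration sequences (this is where normality of both $H$ in $G$ and $\Delta(H)$ in $P$ is used) and that the comparison of fibres is the stated equivalence. This is precisely where the commutator hypothesis $[H,N]=1$, via the normality of $\Delta(H)$, does the real work; everything after it is formal.
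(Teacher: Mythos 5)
Your proof is correct, and your normality computation is exactly the verification the paper leaves to the reader; but the way you extract the upper bound is genuinely different from the paper's route. Writing $W=(G\times_Q G)/\Delta(H)$, the paper realises the single extension $1\to H\xrightarrow{\Delta|_H} G\times_Q G\to W\to 1$ by a fibration sequence $K(H,1)\to K(G\times_Q G,1)\to K(W,1)$, notes that the fibre inclusion factors through $f=B\Delta$ (since $\Delta|_H$ factors as $H\hookrightarrow G\xrightarrow{\Delta}G\times_Q G$), and applies Lemma~\ref{lem:secatfibering} directly to get $\secat(f)\le\cat(K(W,1))=\cld(W)$; the quotient $G/H$ and the induced map $\bar\Delta$ never appear. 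You instead complete the picture to a map of extensions, recognise the square of classifying spaces as a homotopy pullback via the fibrewise criterion (equivalently, at the group level, $G\cong (G/H)\times_W(G\times_Q G)$ via $g\mapsto(gH,\Delta(g))$, a pullback of surjections), and then combine monotonicity of $\secat$ under pullbacks, Lemma~\ref{lem:secatfacts}(b) together with parts (c)--(e) to handle the fibration replacement of $B\bar\Delta$, with the standard bound of $\secat$ by the category of the codomain. Your route needs slightly more machinery: the fibre criterion for homotopy pullbacks (a five-lemma argument on the long exact sequences), and the fact $\secat(p)\le\cat(\mathrm{codomain})$, which the paper never states explicitly, though it follows from Lemma~\ref{lem:secatfibering} applied with $q=\mathrm{Id}$ and $F$ a point. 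In exchange it buys the sharper intermediate inequality $\TC[\rho:G\tto Q]\le\secat\bigl(\bar\Delta:G/H\to W\bigr)$, which can in principle be smaller than $\cld(W)$, and it makes the group-theoretic pullback structure underlying the theorem explicit rather than implicit.
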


These bounds are employed in Section 6, together with classical results on duality groups due to Bieri and Eckmann \cite{BE}, to give a new computation of the parametrised topological complexity of the Fadell--Neuwirth fibrations in the planar case which appears to be more conceptual that the computation in \cite{CFW2} involving cup-lengths.

Our methods can also be used to give an easy example of a fibration for which $\TC(X)=1$ and $\TC_B(X)=\infty$ (Example~\ref{ex:infinite}), and to prove that if $N$ is central in $G$ then $\TC[\rho:G\tto Q]=\cld(N)$ (Corollary~\ref{cor:central}). We also prove the following homotopy invariance for parametrised topological complexity (Proposition~\ref{prop:hinv}), which should be of independent interest.

\begin{prop*}
Let $p:E\to B$ and $p':E'\to B'$ be fibrations.  Assume given a commutative diagram 
\[
\xymatrix{
E \ar[r]^{h} \ar[d]_p & E'\ar[d]^{p'} \\
B \ar[r]^{\bar{h}} & B'
}
\]
in which $h$ and $\bar{h}$ are homotopy equivalences. Then 
\[
\TC[p:E\to B]=\TC[p':E'\to B'].
\]
\end{prop*}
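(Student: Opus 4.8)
The plan is to apply the parametrised endpoint construction to the given square, obtaining a map from the fibration $\Pi$ to the fibration $\Pi'$, and then to appeal to the homotopy invariance of sectional category. Recall the classical fact that if two fibrations sit in a strictly commutative square whose two horizontal arrows are homotopy equivalences, then their sectional categories agree. Since $\Pi\co E^I_B\to E\times_B E$ and $\Pi'\co (E')^I_{B'}\to E'\times_{B'} E'$ are fibrations computing $\TC[p\co E\to B]$ and $\TC[p'\co E'\to B']$ respectively, it suffices to produce such a square and to check that the induced horizontal maps are homotopy equivalences.

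First I would construct the induced maps from $h$ and $\bar h$. Sending $(e_1,e_2)\mapsto(h(e_1),h(e_2))$ defines a map
\[
H:=h\times_{\bar h}h\co E\times_B E\longrightarrow E'\times_{B'} E',
\]
which is well defined because $p'\circ h=\bar h\circ p$ forces $p(e_1)=p(e_2)$ to imply $p'(h(e_1))=p'(h(e_2))$. Similarly, post-composition with $h$ carries a path lying in a single fibre of $p$ to a path lying in a single fibre of $p'$, giving $h_\ast\co E^I_B\to (E')^I_{B'}$. These maps are compatible with the formation of endpoints, so $\Pi'\circ h_\ast=H\circ\Pi$, and we obtain the required commutative square with vertical maps $\Pi,\Pi'$ and horizontal maps $h_\ast, H$.

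The crux is to prove that $H$ and $h_\ast$ are homotopy equivalences, and here I would exploit that $p$ is a fibration. The fibred product $E\times_B E$ is the strict pullback of the cospan $E\xrightarrow{p}B\xleftarrow{p}E$; because $p$ is a fibration, this strict pullback is in fact a homotopy pullback. The triple $(h,\bar h,h)$ is a map from this cospan to the analogous cospan for $p'$ whose every component is a homotopy equivalence, so the induced map of homotopy pullbacks --- namely $H$ --- is a homotopy equivalence. The same reasoning handles $h_\ast$: one identifies $E^I_B$ with the strict pullback of $E^I\xrightarrow{p^I}B^I\xleftarrow{c}B$, where $c$ is the inclusion of constant paths, observes that $p^I$ is again a fibration, and deduces that $E^I_B$ is a homotopy pullback; the cospan map induced by $(h^I,\bar h^I,\bar h)$ is levelwise a homotopy equivalence, whence $h_\ast$ is a homotopy equivalence. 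I expect the main technical point to be precisely this verification that the two strict pullbacks are genuine homotopy pullbacks, which rests on the fibration hypotheses for $p$ and $p^I$ together with the gluing lemma for homotopy pullbacks.

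Finally, with $H$ and $h_\ast$ both homotopy equivalences, the commutative square relating the fibrations $\Pi$ and $\Pi'$ has both horizontal maps homotopy equivalences. The homotopy invariance of sectional category then gives $\secat(\Pi)=\secat(\Pi')$, that is, $\TC[p\co E\to B]=\TC[p'\co E'\to B']$, as required.
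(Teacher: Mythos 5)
Your proposal is correct, and its skeleton matches the paper's: both construct the induced square relating $\Pi$ and $\Pi'$ and finish by invoking the invariance of sectional category under pre- and post-composition with homotopy equivalences (the paper's Lemma~2.1, parts (d) and (e)). Where you genuinely diverge is in the key step of showing that the induced maps $h_\ast$ and $H$ are homotopy equivalences. The paper quotes the Proposition on page~53 of May's \emph{Concise Course}, which upgrades the pair $(h,\bar h)$ to a fibre-preserving homotopy \emph{equivalence}; it then pushes the inverse pair and the fibre-preserving homotopies through the parametrised endpoint construction via explicit formulas, obtaining explicit homotopy inverses for $h_\ast$ and $H$. You instead never produce any inverse data: you identify $E\times_B E$ and $E^I_B$ as strict pullbacks along Hurewicz fibrations ($p$, respectively $p^I$), hence as homotopy pullbacks, and invoke the gluing lemma to conclude that a levelwise homotopy equivalence of cospans induces a homotopy equivalence of pullbacks. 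This is a valid alternative, with two points you should make explicit: you need the gluing lemma in its strong (Str\o{}m-model-structure) form, for genuine homotopy equivalences and Hurewicz fibrations rather than weak equivalences --- the final appeal to sectional-category invariance requires honest homotopy equivalences --- and you need the standard fact that $p^I=\operatorname{map}(I,p)$ is again a Hurewicz fibration (true since $I$ is locally compact). The trade-off: the paper's argument is explicit and self-contained modulo May's fibrewise result, whereas yours replaces the construction of inverses and fibre-preserving homotopies by general homotopy-pullback machinery, which is cleaner and would generalize to other parametrised constructions, at the cost of resting on a comparably deep classical input.
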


The author would like to thank Dan Cohen and Michael Farber for stimulating conversations.

\section{Sectional category and parametrised topological complexity}

In this section we recall some material about sectional category and parametrised topological complexity. We claim no originality, with the possible exception of Proposition~\ref{prop:hinv} which is a mild generalisation of a result in \cite{CFW1}.

Recall that a \emph{(Hurewicz) fibration} is a map having the homotopy lifting property with respect to all spaces, while a \emph{Serre fibration} is a map having the homotopy lifting property with respect to CW-complexes.
Schwarz \cite{Schwarz} defined the \emph{genus} of a fibration $p:E\to B$, denoted $\genus(p)$, to be the smallest $k$ such that $B$ admits a cover by open sets $U_0,\ldots, U_k$, each of which admits a local section, i.e. a map $s_i:U_i\to E$ such that $p\circ s_i$ equals the inclusion $j_i:U_i\hookrightarrow B$. This notion was subsequently generalised to give an invariant of arbitrary maps (see \cite{AS} and references therein). The \emph{sectional category} of a map $p:E\to B$, denoted $\secat(p)$, is defined to be the smallest $k$ such that $B$ admits a cover by open sets $U_0,\ldots, U_k$, each of which admits a local homotopy section, i.e. a map $\sigma_i:U_i\to E$ such that $p\circ \sigma_i\simeq j_i:U_i\hookrightarrow B$.

The following lemma collects some basic facts about the genus and sectional category. Proofs can be found in \cite{AS} or \cite{Schwarz}, or supplied by the reader.

\begin{lem}\label{lem:secatfacts}
Let $p:E\to B$ be a map.
\begin{itemize}
\item[(a)] If $p:E\to B$ is a fibration, then $\secat(p)=\genus(p)$.
\item[(b)] If $p:E\to B$ is a fibration and $q:D\to A$ is the pull-back of $p$ along some map $\alpha:A\to B$, then $\secat(q)\leq\secat(p)$.
\item[(c)] If $p':E\to B$ is homotopic to $p$, then $\secat(p')=\secat(p)$.
\item[(d)] Let $h:E'\to E$ be a homotopy equivalence. Then $\secat(p\circ h)=\secat(p)$.
\item[(e)] Let $\alpha:B\to B'$ be a homotopy equivalence. Then $\secat(\alpha\circ p)=\secat(p)$.
\end{itemize}
\end{lem}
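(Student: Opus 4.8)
The plan is to treat the five parts in the order listed, establishing (a) first since the remaining parts either invoke it or follow the same template. For (a) the inequality $\secat(p)\le\genus(p)$ is immediate, because every strict local section is in particular a local homotopy section. For the reverse inequality I would exploit the homotopy lifting property of the fibration $p$: given a local homotopy section $\sigma_i\co U_i\to E$ together with a homotopy $H\co U_i\times I\to B$ from $p\circ\sigma_i$ to the inclusion $j_i$, lift $H$ through $p$ starting at $\sigma_i$ to obtain $\tilde H\co U_i\times I\to E$ with $p\circ\tilde H=H$ and $\tilde H_0=\sigma_i$. Then $\tilde H_1$ is a strict section over $U_i$, so the same cover witnesses $\genus(p)\le\secat(p)$.

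For (b), since the pullback of a fibration is a fibration, both $p$ and $q$ satisfy $\secat=\genus$ by part (a), so it suffices to produce strict local sections of $q$ from those of $p$. Writing $D=\{(a,e)\mid \alpha(a)=p(e)\}$ and choosing strict sections $s_i\co U_i\to E$ of $p$, the sets $V_i:=\alpha^{-1}(U_i)$ cover $A$, and $a\mapsto(a,s_i(\alpha(a)))$ defines a strict section of $q$ over $V_i$; hence $\genus(q)\le\genus(p)$.

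Parts (c) and (d) follow the same one-line template of post- or pre-composing sections. For (c), if $p'\simeq p$ then $p'\circ\sigma_i\simeq p\circ\sigma_i\simeq j_i$, so any cover realising $\secat(p)$ realises $\secat(p')$ and vice versa. For (d), with homotopy inverse $h'$ to $h$, the assignment $\sigma_i\mapsto h'\circ\sigma_i$ turns homotopy sections of $p$ into homotopy sections of $p\circ h$ (using $h\circ h'\simeq\mathrm{id}$), while $\tau_i\mapsto h\circ\tau_i$ goes the other way; the two inequalities give equality.

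The only part requiring genuine care is (e), where the base itself changes, so one cannot simply re-use the same open cover. Here I would fix a homotopy inverse $\beta\co B'\to B$ to $\alpha$ and transport covers across it. Given a cover $\{U_i\}$ of $B$ with homotopy sections $\sigma_i$ of $p$, the sets $\beta^{-1}(U_i)$ cover $B'$ and $\sigma_i\circ\beta$ furnishes homotopy sections of $\alpha\circ p$, using that $\alpha\circ\beta\simeq\mathrm{id}_{B'}$ restricts to each open set. The reverse inequality is the delicate step: starting from a cover $\{V_j\}$ of $B'$ with homotopy sections $\rho_j$ of $\alpha\circ p$, I pull back to $\alpha^{-1}(V_j)$ and set $\mu_j=\rho_j\circ\alpha$, but verifying $p\circ\mu_j\simeq j_j$ requires first composing with $\alpha$, using $\alpha\circ p\circ\rho_j\simeq\mathrm{incl}$, and then cancelling the leftmost $\alpha$ by applying $\beta$ and invoking $\beta\circ\alpha\simeq\mathrm{id}_B$. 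This chain of homotopies across the equivalence is where I expect the main subtlety to lie; everything else is formal.
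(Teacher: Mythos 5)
Your proposal is correct, and it supplies exactly the standard arguments that the paper deliberately omits (its ``proof'' consists of citing \cite{AS} and \cite{Schwarz} and inviting the reader to fill in details): (a) by lifting the section homotopy through the Hurewicz fibration to straighten a homotopy section into a strict one, (b) by reducing to strict sections via (a) and transporting them across the pullback by $a\mapsto(a,s_i(\alpha(a)))$, and (c)--(e) by formal whiskering of homotopies --- all as in the cited sources. The step you flag as delicate in (e) is in fact routine: having shown $\alpha\circ p\circ\mu_j\simeq \alpha|_{W_j}$ on $W_j:=\alpha^{-1}(V_j)$, post-compose with the homotopy inverse $\beta$ and chain the whiskered homotopies $p\circ\mu_j\simeq\beta\circ\alpha\circ p\circ\mu_j\simeq\beta\circ\alpha|_{W_j}\simeq \mathrm{incl}\colon W_j\hookrightarrow B$ (the first from $\beta\circ\alpha\simeq\mathrm{id}_B$ composed with $p\circ\mu_j$, the last by restricting that same homotopy to $W_j$), so your argument closes with no further subtlety, and correctly uses no fibration hypothesis in (c)--(e).
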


Two fundamental examples of sectional category are the (Lusternik--Schnirelmann) category and topological complexity. Let $Y$ be a path-connected space with base point $y_0$. The \emph{category} of $Y$ may be defined by
\[
\cat(Y)=\secat(\{y_0\}\hookrightarrow Y)=\secat(p:P_0Y\to Y),
\]
where $P_0Y=\{\gamma:I\to Y\mid \gamma(0)=y_0\}$ is the based path space and $p:\gamma\mapsto\gamma(1)$ is the evaluation fibration. The \emph{topological complexity} of $Y$ may be defined by
\[
\TC(Y)=\secat(d:Y\to Y\times Y)=\secat(\pi:Y^I\to Y\times Y),
\]
where $d:Y\to Y\times Y$ is the diagonal map, $Y^I=\{\gamma:I\to Y\}$ is the free path space and $\pi:\gamma\mapsto\big(\gamma(0),\gamma(1)\big)$ is the endpoint fibration.   

The following lemma, implicit in \cite{Grant}, can be used to bound the sectional category of a map by finding a suitable fibering of its codomain.

\begin{lem}\label{lem:secatfibering}
Let $p: E\to B$ be a map. Suppose we have a homotopy commutative diagram 
\[
\xymatrix{ 
 & E\ar[d]^p & \\
F\ar[r]^{\iota} \ar[ur] & B \ar[r]^q & Y
}
\]
in which $Y$ is connected and the row is a fibration sequence. Then $\secat(p)\leq \cat(Y)$.
\end{lem}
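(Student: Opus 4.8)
The plan is to pull back a categorical open cover of $Y$ along $q$ and use the fibration sequence to manufacture local homotopy sections of $p$. Write $s\co F\to E$ for the unlabelled diagonal arrow, so that the homotopy commutativity of the triangle reads $p\circ s\simeq\iota$. Set $k=\cat(Y)$ and invoke the standard characterisation of category: choose an open cover $V_0,\ldots,V_k$ of $Y$ such that each inclusion $V_i\hookrightarrow Y$ is null-homotopic. Since $Y$ is connected, each such null-homotopy may be taken to terminate at the basepoint $y_0\in Y$ whose fibre is $F$. I would then pull back, setting $U_i=q^{-1}(V_i)$; these form an open cover of $B$ by $k+1$ sets, and the claim is that each $U_i$ supports a local homotopy section of $p$.

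For the key step, fix $i$ and write $j_i\co U_i\hookrightarrow B$ for the inclusion. Because $U_i=q^{-1}(V_i)$, the composite $q\circ j_i\co U_i\to Y$ factors through $V_i\hookrightarrow Y$, hence is null-homotopic; let $H\co U_i\times I\to Y$ be a homotopy from $q\circ j_i$ to the constant map at $y_0$. Using that $q$ is a fibration, I would lift $H$ to a homotopy $\tilde H\co U_i\times I\to B$ with $\tilde H_0=j_i$ and $q\circ\tilde H=H$. Then $\tilde H_1$ has image in $q^{-1}(y_0)=F$, so it corestricts to a map $r_i\co U_i\to F$ with $\iota\circ r_i=\tilde H_1\simeq\tilde H_0=j_i$. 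Defining $\sigma_i=s\circ r_i\co U_i\to E$, I obtain
\[
p\circ\sigma_i=p\circ s\circ r_i\simeq\iota\circ r_i\simeq j_i,
\]
so that $\sigma_i$ is a local homotopy section of $p$ over $U_i$. As $i$ was arbitrary, $B$ is covered by $k+1$ open sets each supporting such a section, whence $\secat(p)\leq k=\cat(Y)$.

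The main obstacle is precisely the middle step, where the fibration property of $q$ is used to deform $U_i$ into the fibre $F$: this is what converts the purely categorical (contractibility) data on $Y$ into honest homotopy sections over $B$. One point to pin down is that the hypothesis ``the row is a fibration sequence'' supplies the homotopy lifting property actually used: if $q$ is a Hurewicz fibration this is immediate, while if $q$ is only a Serre fibration one restricts attention to covers whose members have the homotopy type of CW-complexes, which suffices for the intended applications. The remaining points — that the null-homotopies can be taken relative to the single basepoint $y_0$ (by connectedness of $Y$), and that the chain of homotopies above may be concatenated — are formal bookkeeping.
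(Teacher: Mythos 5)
Your proof is correct and takes essentially the same approach as the paper: pull back a categorical open cover of $Y$ along $q$, use the homotopy lifting property of $q$ to deform each $q^{-1}(V_i)\hookrightarrow B$ into the fibre $F$, and then compose with the diagonal arrow $s\colon F\to E$ (using $p\circ s\simeq\iota$) to produce local homotopy sections of $p$. The paper's proof is simply a condensed version of this argument, with the same implicit points (null-homotopies rebased at $y_0$ via connectedness, and the Hurewicz lifting property applied to arbitrary open subsets) that you spell out explicitly.
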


\begin{proof}
Let $U$ be an open set in $Y$ such that the inclusion $U\hookrightarrow Y$ factors through $\{y_0\}\hookrightarrow Y$ up to homotopy. The homotopy lifting property for $q$ implies that the inclusion $q^{-1}(U)\hookrightarrow B$ factors through $\iota:F\to B$, and hence through $p:E\to B$, up to homotopy.
\end{proof}

Given a map $p:E\to B$, the \emph{parametrised endpoint map} is defined by
$$\Pi:E^I_B\to E\times_B E, \qquad \Pi(\gamma)=\big(\gamma(0),\gamma(1)\big)$$
where $E^I_B=\{\gamma: I\to E \mid p(\gamma(t))=p(\gamma(0))\mbox{ for all }t\in I\}$ is the space of paths in $E$ contained in a single fibre of $p$, and $E\times_B E:=\{(e_1,e_2)\in E\times E \mid p(e_1)=p(e_2)\}$ is the fibred product.

The parametrised endpoint map $\Pi:E^I_B\to E\times_B E$ is a Hurewicz fibration whenever $p$ is a Hurewicz fibration; a proof is supplied in the Appendix to \cite{CFW2}.

\begin{defn}[\cite{CFW1, CFW2}]
Let $p:E\to B$ be a fibration. The {\em parametrised topological complexity} of $p$ is defined to be $$\TC[p:E\to B]:=\secat(\Pi),$$ the sectional category of the parametrised endpoint fibration $\Pi: E^I_B\to E\times_B E$. 

\end{defn}
The following lemma describes the behaviour of parametrised topological complexity under taking pull-backs, and is a generalization of \cite[(4.3)]{CFW1}; see also \cite[Corollary 15]{G-C}.

\begin{lem}\label{lem:pullbacks}
Let $q:D\to A$ be the pull-back of the fibration $p:E\to B$ under a map $\alpha:A\to B$. Then $$\TC[q:D\to A]\leq \TC[p:E\to B].$$
\end{lem}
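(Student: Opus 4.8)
The plan is to show that the parametrised endpoint fibration $\Pi_q:D^I_A\to D\times_A D$ of $q$ is, up to homeomorphism, the pull-back of the parametrised endpoint fibration $\Pi_p:E^I_B\to E\times_B E$ of $p$ along a suitable map of fibred products. Since $\Pi_p$ is a fibration whenever $p$ is (as recalled before the definition of parametrised topological complexity), the desired inequality will then follow at once from part~(b) of \lemref{lem:secatfacts}.

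First I would make the pull-back explicit, writing $D=A\times_B E=\{(a,e)\in A\times E\mid \alpha(a)=p(e)\}$, with $q$ the projection to $A$ and with the canonical map $\hat\alpha:D\to E$, $(a,e)\mapsto e$, satisfying $p\circ\hat\alpha=\alpha\circ q$. Unwinding definitions, a point of $D\times_A D$ is a triple $(a,e,e')$ with $\alpha(a)=p(e)=p(e')$, and $\hat\alpha$ induces
\[
\beta:D\times_A D\to E\times_B E,\qquad \beta(a,e,e')=(e,e').
\]
Next I would identify the total spaces. A path $\delta\in D^I_A$ lies in a single fibre of $q$, so its $A$-coordinate is a constant $a$; it is therefore the same data as the path $\gamma=\hat\alpha\circ\delta$ in $E$, which satisfies $p(\gamma(t))=\alpha(a)$ for all $t$. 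In other words, $D^I_A$ is identified with the set of pairs $(a,\gamma)$ where $\gamma\in E^I_B$ lies in the fibre of $p$ over $\alpha(a)$.

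With these identifications in hand, the crux is to verify that the square
\[
\xymatrix{
D^I_A \ar[r] \ar[d]_{\Pi_q} & E^I_B \ar[d]^{\Pi_p} \\
D\times_A D \ar[r]^{\beta} & E\times_B E
}
\]
is a pull-back, where the top map sends $(a,\gamma)$ to $\gamma$. Indeed, an element of $(D\times_A D)\times_{E\times_B E}E^I_B$ is a pair $\big((a,e,e'),\gamma\big)$ with $\gamma(0)=e$, $\gamma(1)=e'$ and $p\circ\gamma$ constant; since $e$ and $e'$ are then determined by $\gamma$, this is exactly the data of a pair $(a,\gamma)$ as above, and the left projection of the pull-back corresponds to $\Pi_q$. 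The only point demanding care is matching the ``single fibre'' conditions precisely under these homeomorphisms, which is routine bookkeeping rather than a genuine obstacle. Granting this, $\Pi_q$ is the pull-back of the fibration $\Pi_p$ along $\beta$, so \lemref{lem:secatfacts}(b) gives $\secat(\Pi_q)\leq\secat(\Pi_p)$, which is exactly $\TC[q:D\to A]\leq\TC[p:E\to B]$.
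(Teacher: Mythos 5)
Your proposal is correct and follows exactly the paper's own argument: identify the parametrised endpoint fibration of $q$ with the pull-back of $\Pi_p$ along the induced map $D\times_A D\to E\times_B E$, then apply Lemma~\ref{lem:secatfacts}(b). The paper leaves the pull-back identification to the reader, so your explicit verification simply fills in what the paper omits.
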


\begin{proof}
We have $D=\{(a,e)\in A\times E \mid \alpha(a)=p(e)\}$, and a commuting diagram
\[
\xymatrix{
D \ar[d]_{q} \ar[r]^{\tilde\alpha} & E \ar[d]^p \\
A \ar[r]^\alpha & B
}
\]
in which $\tilde\alpha(a,e)=e$ and $q(a,e)=a$. Observe that $\tilde\alpha$ induces a map $D\times_A D\to E\times_B E$. The reader can verify that the pull-back of $\Pi:E^I_B\to E\times_B E$ under this map is homeomorphic as a fibration to $\Phi:D^I_A\to D\times_A D$, the parametrised endpoint fibration associated to $q$. Hence by Lemma~\ref{lem:secatfacts}(b),
\[
\TC[q:D\to A]=\secat(\Phi)\leq \secat(\Pi)=\TC[p:E\to B].
\]
\end{proof}

We now turn to the homotopy invariance of $\TC[p:E\to B]$. Recall that a \emph{fibrewise map} from a fibration $p:E\to B$ to another fibration $p':E'\to B$ is a map $h:E\to E'$ such that $p'(h(e))=p(e)$ for all $e\in E$.
A \emph{fibrewise homotopy} is a map $H:E\times I\to E'$ such that 
$p'(H(e,t))=p(e)$ for all $e\in E$, $t\in I$, so that for each $t\in I$ the map $h_t:=H(-,t):E\to E'$ is a fibrewise map.  Then $H$ is a fibrewise homotopy from $h_0$ to $h_1$. Two fibrations $p:E\to B$ and $p':E'\to B$ are \emph{fibrewise homotopy equivalent} if there are fibrewise maps $h:E\to E'$ and $i:E'\to E$ such that $i\circ h:E\to E$ is fibrewise homotopic to ${\rm Id}_E:E\to E$ and $h\circ i:E'\to E'$ is fibrewise homotopic to ${\rm Id}_{E'}:E'\to E'$. It is well known that if fibrewise map $h:E\to E'$ is a homotopy equivalence, then it is a fibrewise homotopy equivalence.
  
\begin{prop}[{\cite[Proposition 5.1]{CFW1}}]  
If $p:E\to B$ and $p':E'\to B$ are fibrewise homotopy equivalent fibrations, then $\TC[p:E\to B] = \TC[p':E'\to B]$.
\end{prop}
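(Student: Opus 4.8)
The plan is to extract from a fibrewise homotopy equivalence an induced pair of genuine homotopy equivalences relating the two parametrised endpoint fibrations, and then to invoke parts (d) and (e) of Lemma~\ref{lem:secatfacts}. A fibrewise map $h:E\to E'$ induces a map on fibred products
\[
h\times_B h:E\times_B E\to E'\times_B E',\qquad (e_1,e_2)\mapsto (h(e_1),h(e_2)),
\]
well defined since $p'(h(e_i))=p(e_i)$, and a map on fibrewise path spaces
\[
h_*:E^I_B\to (E')^I_B,\qquad \gamma\mapsto h\circ\gamma,
\]
well defined since $p'(h(\gamma(t)))=p(\gamma(t))=p(\gamma(0))$, so that $h\circ\gamma$ again lies in a single fibre of $p'$. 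These assemble into a commuting square
\[
\xymatrix{
E^I_B \ar[r]^{h_*} \ar[d]_{\Pi} & (E')^I_B \ar[d]^{\Pi'} \\
E\times_B E \ar[r]^{h\times_B h} & E'\times_B E'
}
\]
so that $\Pi'\circ h_*=(h\times_B h)\circ\Pi$.

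The substance of the argument is to show that $h_*$ and $h\times_B h$ are homotopy equivalences. Writing $i:E'\to E$ for a fibrewise homotopy inverse and $H:E\times I\to E$ for a fibrewise homotopy from $i\circ h$ to ${\rm Id}_E$, I would take $i\times_B i$ and $i_*$ as candidate inverses. For the fibred products, the formula $((e_1,e_2),t)\mapsto (H(e_1,t),H(e_2,t))$ defines a homotopy $(i\times_B i)\circ(h\times_B h)\simeq{\rm Id}$, which lands in $E\times_B E$ precisely because $H$ is fibrewise, so that $p(H(e_1,t))=p(e_1)=p(e_2)=p(H(e_2,t))$. For the path spaces, sending $(\gamma,t)$ to the path $s\mapsto H(\gamma(s),t)$ defines a homotopy $i_*\circ h_*\simeq{\rm Id}$ landing in $E^I_B$ for the same reason. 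The reverse composites are treated symmetrically using the fibrewise homotopy $h\circ i\simeq{\rm Id}_{E'}$.

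Finally I would assemble the conclusion. Since $h_*$ is a homotopy equivalence, Lemma~\ref{lem:secatfacts}(d) gives $\secat(\Pi'\circ h_*)=\secat(\Pi')$; since $h\times_B h$ is a homotopy equivalence, Lemma~\ref{lem:secatfacts}(e) gives $\secat((h\times_B h)\circ\Pi)=\secat(\Pi)$. Commutativity of the square identifies these two sectional categories, giving $\TC[p':E'\to B]=\secat(\Pi')=\secat(\Pi)=\TC[p:E\to B]$.

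I expect the main obstacle to be the verification on fibrewise path spaces: that $h_*$ and the induced homotopy are continuous in the compact-open topology, and that the homotopy never leaves $E^I_B$. The recurring structural point, and the reason the hypothesis of \emph{fibrewise} rather than merely ordinary homotopy equivalence is indispensable, is that a fibrewise homotopy preserves the fibre of each point; this is exactly what keeps every induced homotopy inside the fibred product and inside the space of single-fibre paths.
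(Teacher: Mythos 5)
Your proof is correct, and it is essentially the argument the paper itself uses: the statement is cited from \cite{CFW1} without proof, but the paper's proof of the more general Proposition~\ref{prop:hinv} proceeds exactly as you do, inducing maps and homotopies on $E^I_B$ and $E\times_B E$ via the same formulas ($F(\gamma,t)(s)=H(\gamma(s),t)$ and $(e_1,e_2,t)\mapsto(H(e_1,t),H(e_2,t))$) and then invoking parts (d) and (e) of Lemma~\ref{lem:secatfacts}.
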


We will generalise this result somewhat in order to compare fibrations over different bases. Let $p:E\to B$ and $p':E'\to B'$ be fibrations. A \emph{fibre-preserving map} from $p$ to $p'$ is a pair of maps $h:E\to E'$ and $\bar{h}:B\to B'$ such that the following diagram commutes:
\[
\xymatrix{
E \ar[r]^{h} \ar[d]_p & E'\ar[d]^{p'} \\
B \ar[r]^{\bar{h}} & B'.
}
\]
We will denote such a pair by $(h,\bar{h}):p\to p'$. It is obvious that fibre-preserving maps are the morphisms in a category whose objects are the fibrations.
 
A \emph{fibre-preserving homotopy} is a pair of homotopies $H:E\times I\to E'$ and $\bar{H}:B\times I\to B'$ such that the following diagram commutes:
\[
\xymatrix{
E\times I \ar[r]^{H} \ar[d]_{p\times\mathrm{Id}_I} & E' \ar[d]^{p'} \\
B\times I \ar[r]^{\bar{H}} & B'.
}
\]
In other words, for each $t\in I$ the pair $h_t:=H(-,t):E\to E'$ and $\bar{h}_t:=\bar{H}(-,t):B\to B'$ form a fibre-preserving map. Then the pair $(H,\bar{H})$ is a fibre-preserving homotopy from $(h_0,\bar{h}_0):p\to p'$ to $(h_1,\bar{h}_1):p\to p'$. Two fibrations $p:E\to B$ and $p':E'\to B'$ are said to be \emph{fibre-preserving homotopy equivalent} if there are fibre-preserving maps $(h,\bar{h}):p\to p'$ and $(i,\bar{i}):p'\to p$ such that $(i\circ h,\bar{i}\circ\bar{h})$ is fibre-preserving homotopic to $({\rm Id}_{E},{\rm Id}_{B})$ and $(h\circ i,\bar{h}\circ\bar{i})$ is fibre-preserving homotopic to $({\rm Id}_{E'},{\rm Id}_{B'})$.

\begin{prop}\label{prop:hinv}
Let $p:E\to B$ and $p':E'\to B'$ be fibrations. Assume given a commutative diagram 
\[
\xymatrix{
E \ar[r]^{h} \ar[d]_p & E'\ar[d]^{p'} \\
B \ar[r]^{\bar{h}} & B'
}
\]
in which $h$ and $\bar{h}$ are homotopy equivalences. Then 
\[
\TC[p:E\to B]=\TC[p':E'\to B'].
\]
\end{prop}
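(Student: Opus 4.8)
The plan is to reduce to the situation of two fibrations over a \emph{common} base, where fibrewise homotopy invariance (\cite[Proposition 5.1]{CFW1}) applies, and then to show that pulling back along the homotopy equivalence $\bar h$ leaves the parametrised topological complexity unchanged.

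First I would replace $p'$ by its pullback over $B$. Set $D=\{(b,e')\in B\times E'\mid \bar h(b)=p'(e')\}$ with projection $q:D\to B$, $q(b,e')=b$, and let $\tilde{\bar h}:D\to E'$, $\tilde{\bar h}(b,e')=e'$, be the canonical map covering $\bar h$. Commutativity of the given square means that $e\mapsto (p(e),h(e))$ is a well-defined map $h':E\to D$, which is fibrewise over $B$ and satisfies $\tilde{\bar h}\circ h'=h$. Since $p'$ is a fibration and $\bar h$ is a homotopy equivalence, $\tilde{\bar h}$ (a pullback of $\bar h$ along $p'$) is a homotopy equivalence; as $h$ is a homotopy equivalence, two-out-of-three shows $h'$ is too. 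A fibrewise map over $B$ that is a homotopy equivalence is a fibrewise homotopy equivalence by the fact recalled above, so \cite[Proposition 5.1]{CFW1} yields $\TC[p:E\to B]=\TC[q:D\to B]$.

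It remains to prove $\TC[q:D\to B]=\TC[p':E'\to B']$. Here \lemref{lem:pullbacks} gives only the inequality $\TC[q:D\to B]\leq\TC[p':E'\to B']$, and supplying the reverse inequality is the crux of the argument. As in the proof of \lemref{lem:pullbacks}, the parametrised endpoint fibration $\Phi:D^I_B\to D\times_B D$ of $q$ is the pullback of the parametrised endpoint fibration $\Pi:(E')^I_{B'}\to E'\times_{B'}E'$ of $p'$ along the induced map $\hat h:D\times_B D\to E'\times_{B'}E'$. The key point is that $\hat h$ is a homotopy equivalence: the projection $E'\times_{B'}E'\to B'$ is a fibration (its first component is a pullback of $p'$, and post-composing with $p'$ exhibits the projection as a composite of fibrations), and one checks directly that $D\times_B D$ is exactly the pullback of this fibration along $\bar h$, with $\hat h$ the canonical covering map; hence $\hat h$ is a pullback of the homotopy equivalence $\bar h$ along a fibration, and so a homotopy equivalence. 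Let $g:D^I_B\to (E')^I_{B'}$ be the map of total spaces provided by the pullback square, so that $\Pi\circ g=\hat h\circ\Phi$; being a pullback of $\hat h$ along the fibration $\Pi$, the map $g$ is also a homotopy equivalence. Then \lemref{lem:secatfacts}(e) and (d) give
\[
\secat(\Phi)=\secat(\hat h\circ\Phi)=\secat(\Pi\circ g)=\secat(\Pi),
\]
that is, $\TC[q:D\to B]=\TC[p':E'\to B']$. Combining this with the equality of the previous paragraph proves the proposition.

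The main obstacle, as indicated, is the passage from the one-sided estimate of \lemref{lem:pullbacks} to an equality under base change along a homotopy equivalence; everything hinges on recognising $\hat h$ as a pullback of $\bar h$ along the fibration $E'\times_{B'}E'\to B'$, after which \lemref{lem:secatfacts}(d),(e) do the work. The remaining verifications---that $h'$ and $\hat h$ are the asserted pullback maps, and that the relevant projections are fibrations---are routine.
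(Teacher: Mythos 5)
Your proof is correct, but it takes a genuinely different route from the paper's. The paper's argument is a one-step affair: it invokes the Proposition on page 53 of \cite{May}, which says that a commutative square of fibrations whose horizontal maps $h$, $\bar h$ are homotopy equivalences is automatically a fibre-preserving homotopy \emph{equivalence}; it then notes that fibre-preserving maps and homotopies induce, by explicit formulas, fibre-preserving maps and homotopies between the parametrised endpoint fibrations, so that $\Pi$ and $\Pi'$ sit in a square whose horizontal maps are homotopy equivalences, and it concludes with \lemref{lem:secatfacts}(d),(e)---exactly the endgame you use. You never invoke May's result; instead you interpolate the pullback $q\colon D=\bar h^*E'\to B$ and argue in two legs: first, the comparison map $h'\colon E\to D$ is a fibrewise homotopy equivalence (two-out-of-three plus Dold's theorem as recalled in the paper), so $\TC[p\colon E\to B]=\TC[q\colon D\to B]$ by \cite[Proposition 5.1]{CFW1}; second, base change along the homotopy equivalence $\bar h$ preserves parametrised topological complexity \emph{exactly}, upgrading the inequality of \lemref{lem:pullbacks} to an equality. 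The second leg is where the real content lies, and it is sound: the identifications $D\times_B D\cong \bar h^*\big(E'\times_{B'}E'\big)$ and $D^I_B\cong \hat h^*\big((E')^I_{B'}\big)$ are the same ones underlying the proof of \lemref{lem:pullbacks}, and the homotopy-equivalence claims for $\hat h$ and $g$ rest on the fact that pulling back a homotopy equivalence along a Hurewicz fibration yields a homotopy equivalence. That fact is classical (it is right properness of the Str{\o}m model structure, which holds because every space is fibrant; it can also be deduced from Dold's theorem), but it is not among the facts recorded in \lemref{lem:secatfacts} and you lean on it three times, so you should give it an explicit citation---it plays the same load-bearing role in your argument that May's proposition plays in the paper's. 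As for what each approach buys: the paper's proof is shorter, since May's proposition does the heavy lifting in one stroke; yours is more self-contained relative to the paper's own toolkit and produces a by-product of independent interest, namely that $\TC[\bar h^*p']=\TC[p'\colon E'\to B']$ whenever $\bar h$ is a homotopy equivalence, a sharpening of \lemref{lem:pullbacks}.
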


\begin{proof}
According to the Proposition on page 53 of \cite{May}, the pair $(h,\bar{h})$ is in fact a fibre-preserving homotopy equivalence. It therefore suffices to show that fibre-preserving homotopy equivalent fibrations have the same parametrised topological complexity.

Observe that a fibre-preserving homotopy $(H,\bar{H})$ from $p$ to $p'$ induces a fibre-preserving homotopy
\[
\xymatrix{
E^I_B\times I \ar[r]^{F} \ar[d]_{\Pi\times\mathrm{Id}_I} & (E')^I_{B'} \ar[d]^{\Pi'} \\
(E\times_B E)\times I \ar[r]^{\bar{F}} & E'\times_{B'} E'
}
\]
between parametrised endpoint fibrations. Explicitly, the homotopy $F$ is defined by 
\[
F(\gamma,t)(s)=H(\gamma(s),t),\qquad \gamma\in E^I_B, \, s,t\in I
\]
and the homotopy $\bar{F}$ is defined by
\[
\bar{F}(e_1,e_2,t)=\big(H(e_1,t), H(e_2,t)\big),\qquad (e_1,e_2)\in E\times_B E,\, t\in I.
\]
 It follows that if $p$ and $p'$ are fibre-preserving homotopy equivalent, then there is a fibre-preserving map
 \[
 \xymatrix{
 E^I_B \ar[r]^h \ar[d]_\Pi & (E')^I_{B'} \ar[d]^{\Pi'} \\
 E\times_B E \ar[r]^{\bar{h}} & E'\times_{B'} E'
 }
 \]
 between their parametrised endpoint fibrations such that both $h$ and $\bar{h}$ are homotopy equivalences. Then
 \[
 \TC[p:E\to B]=\secat(\Pi)=\secat(\Pi')=\TC[p':E'\to B']
 \]
 by parts (d) and (e) of Lemma~\ref{lem:secatfacts}. 
\end{proof}

\section{Parametrised TC of group epimorphisms}

Recall that the topological complexity of a discrete group $\pi$ is defined by $\TC(\pi):=\TC(K(\pi,1))$, where $K(\pi,1)$ stands for any \emph{Eilenberg--MacLane space} for $\pi$ (a connected space with fundamental group $\pi$ and trivial higher homotopy groups). Such a space may be taken to be a CW-complex, which then is unique up to homotopy equivalence. This follows from the well-known bijection
\[
\left[K(\pi,1),K(\pi',1)\right]_* \cong \Hom(\pi,\pi')
\]
between pointed homotopy classes of pointed Eilenberg--MacLane CW-complexes and homomorphisms of groups. Therefore $\TC(\pi)$ is well-defined. 

In this section we define the parametrised topological complexity $\TC[\rho:G\tto Q]$ of an epimorphism $\rho:G\tto Q$ of discrete groups. We first define the sectional category of group homomorphisms. It will be useful to introduce the following terminology.

\begin{defn}
We say that a pointed map $f:X\to Y$ \emph{realizes} the group homomorphism $\varphi: G\to Q$ if $X$ is a $K(G,1)$ space, $Y$ is a $K(Q,1)$ space, and $f$ induces $\varphi$ on fundamental groups.
\end{defn}

\begin{lem}\label{lem:realize}
Any homomorphism $\varphi:G\to Q$ of discrete groups may be realized by a pointed map. Furthermore, any two such  maps have the same sectional category. 
\end{lem}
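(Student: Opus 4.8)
The plan is to prove \lemref{lem:realize} in two parts, corresponding to its two sentences. For the first sentence (existence of a realization), I would invoke the standard correspondence between groups and aspherical CW-complexes already recalled in the text: choose Eilenberg--MacLane CW-complexes $X=K(G,1)$ and $Y=K(Q,1)$, and use the bijection $[X,Y]_* \cong \Hom(G,Q)$ to produce a pointed cellular map $f\co X\to Y$ inducing $\varphi$ on $\pi_1$. This disposes of existence immediately, since any homomorphism sits in the image of this bijection.

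The substantive content is the second sentence, the well-definedness of $\secat$. Suppose $f\co X\to Y$ and $f'\co X'\to Y'$ both realize $\varphi$, where $X,X'$ are $K(G,1)$'s and $Y,Y'$ are $K(Q,1)$'s. The goal is to show $\secat(f)=\secat(f')$. The natural strategy is to build a commuting-up-to-homotopy ladder connecting $f$ and $f'$ using homotopy equivalences on both source and target, and then apply the homotopy invariance of sectional category packaged in parts (d) and (e) of \lemref{lem:secatfacts}. Concretely, since $X$ and $X'$ are both $K(G,1)$ CW-complexes, the identity homomorphism on $G$ is realized by a pointed homotopy equivalence $u\co X\to X'$ (unique up to pointed homotopy), and similarly the identity on $Q$ gives a pointed homotopy equivalence $v\co Y\to Y'$.

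The key step is then to verify that the square with horizontal maps $u,v$ and vertical maps $f,f'$ commutes up to homotopy, i.e. $f'\circ u \simeq v\circ f$. This follows because both composites are pointed maps $X\to Y'$, and on $\pi_1$ both induce $\varphi$ (as $u,v$ induce identities and $f,f'$ induce $\varphi$); by the bijection $[X,Y']_*\cong\Hom(G,Q)$ applied to the aspherical target $Y'$, maps inducing the same homomorphism are pointed homotopic, so $f'\circ u\simeq v\circ f$. Granting this homotopy, I compute
\[
\secat(f)=\secat(v\circ f)=\secat(f'\circ u)=\secat(f'),
\]
where the first equality is \lemref{lem:secatfacts}(e) (post-composing with the equivalence $v$), the middle equality is \lemref{lem:secatfacts}(c) (homotopic maps have equal sectional category), and the last is \lemref{lem:secatfacts}(d) (pre-composing with the equivalence $u$).

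I expect the main obstacle to be bookkeeping rather than conceptual difficulty: one must be careful that the relevant homotopy equivalences exist as \emph{pointed} maps and that the $[{-},{-}]_*\cong\Hom(-,-)$ bijection is being applied with an aspherical target (so that the argument identifying $f'\circ u$ with $v\circ f$ is legitimate). A minor subtlety is that \lemref{lem:secatfacts}(c)--(e) as stated concern arbitrary maps and homotopies, so I should confirm the unpointed versions suffice here, which they do since sectional category is a property of the underlying map. Once these points are checked, the chain of equalities above closes the argument.
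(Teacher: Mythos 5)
Your proposal is correct and follows essentially the same route as the paper's proof: both use the bijection $[K(G,1),K(Q,1)]_*\cong\Hom(G,Q)$ for existence, construct pointed homotopy equivalences on source and target realizing the identity homomorphisms, and conclude via the same chain $\secat(f)=\secat(v\circ f)=\secat(f'\circ u)=\secat(f')$ using parts (c), (d), (e) of Lemma~\ref{lem:secatfacts}. The only difference is that you spell out why the square commutes up to pointed homotopy (via the bijection applied to the aspherical target $Y'$), a step the paper asserts without elaboration.
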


\begin{proof}
The first statement is clear, due to the existence of Eilenberg--MacLane spaces and the bijection 
\[
\left[K(G,1),K(Q,1)\right]_* \cong \Hom(G,Q).
\]

Let $f:X\to Y$ and $f':X'\to Y'$ be pointed maps both of which realize $\varphi:G\to Q$. Then there are pointed homotopy equivalences $h:X\to X'$ (realizing the identity $G\to G$) and $\alpha:Y\to Y'$ (realizing the identity $Q\to Q$) such that the following diagram commutes up to pointed homotopy:
\[
\xymatrix{
X\ar[r]^h \ar[d]_f  &X' \ar[d]^{f'} \\
Y \ar[r]^\alpha &  Y'
}
\]
Using parts (c), (d) and (e) of Lemma~\ref{lem:secatfacts} we then have
\[
\secat(f)=\secat(\alpha\circ f)=\secat(f'\circ h)=\secat(f').
\]
\end{proof}
 The above lemma allows us to make the following definition (compare \cite[\S 1]{BCE} where the focus is on group \emph{mono}morphisms).
 \begin{defn}
 The sectional category of a group homomorphism $\varphi:G\to Q$ is defined to be $$\secat(\varphi):=\secat(f)$$ for any map $f$ which realizes $\varphi$.
 \end{defn}

For example, we have $\cat(\pi):=\cat(K(\pi,1))=\secat(\iota:1\to \pi)$ where $\iota$ denotes the inclusion of the identity element, and $\TC(\pi)=\secat(\Delta:\pi\to \pi\times\pi)$ where $\Delta$ denotes the diagonal homomorphism.

Given homomorphisms $\alpha:A\to Q$ and $\beta: B\to Q$, we may form their fibred product
\[
A\times_Q B = \{(a,b)\in A\times B \mid \alpha(a)=\beta(b)\}.
\]
It is a subgroup of the direct product $A\times B$.

\begin{defn}\label{def:pTChomo}
Let $\varphi:G\to Q$ be a group homomorphism. The \emph{parametrised topological complexity} of $\varphi$ is defined by
\[
\TC[\varphi:G\to Q]:=\secat(\Delta:G\to G\times_Q G),
\]
where $\Delta(g)=(g,g)$ is the diagonal homomorphism to the fibred product.
\end{defn}

Although the above definition can be made for arbitrary group homomorphisms, our focus will be on epimorphisms $\rho:G\tto Q$, for the following reason. Recall that a fibration $p:E\to B$ is \emph{$0$-connected} if it is surjective and has path-connected fibres. While any homomorphism can be realised by a surjective fibration (use the path space construction to convert a realizing map to a fibration), it is precisely the epimorphisms which can be realised by $0$-connected fibrations.
 
\begin{prop}\label{prop:diagonal}
Let $p:E\to B$ be a $0$-connected fibration which realizes the epimorphism $\rho:G\tto Q$. Then 
\[
\TC[p:E\to B]=\TC[\rho:G\tto Q].
\]
\end{prop}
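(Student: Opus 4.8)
The plan is to construct, from the $0$-connected fibration $p:E\to B$ realizing $\rho$, a fibre-preserving map from its parametrised endpoint fibration $\Pi:E^I_B\to E\times_B E$ to a realizing map for the diagonal homomorphism $\Delta:G\to G\times_Q G$, and then invoke the homotopy-invariance machinery of Lemma~\ref{lem:secatfacts} to conclude equality of sectional categories. The crux is the identification of the three spaces involved — $E^I_B$, $E\times_B E$, and the target of $p$ — as Eilenberg--MacLane spaces, together with a computation of the homomorphisms they induce on $\pi_1$.

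First I would analyse $E\times_B E$. Since $p$ is a fibration realizing $\rho:G\tto Q$, the base $B$ is a $K(Q,1)$ and $E$ is a $K(G,1)$, with $p_*=\rho$. The fibred product $E\times_B E$ is the total space of the fibration obtained by pulling back $p$ along itself; equivalently it fits in a pullback square over $B$. I would use the long exact sequence of the fibration, or a direct covering-space argument, to show that $E\times_B E$ is aspherical with $\pi_1(E\times_B E)\cong G\times_Q G$, the fibred product of groups. The path-connectedness of $E\times_B E$ follows from $0$-connectedness of $p$ (surjectivity and connected fibres guarantee the fibred product is connected).

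Next I would identify $E^I_B$. The inclusion of constant paths $E\hookrightarrow E^I_B$ is a fibrewise homotopy equivalence: each fibre of $\Pi$ over a point in the diagonal deformation-retracts, and more globally $E^I_B$ is fibrewise homotopy equivalent over $E\times_B E$ to the diagonal copy of $E$ in the relevant sense. Concretely, the space $E^I_B$ of fibrewise paths is homotopy equivalent to $E$ via the source-point evaluation $\gamma\mapsto\gamma(0)$, since the fibres of $p$ are aspherical so a fibrewise path is determined up to homotopy rel endpoints by its endpoints in a fixed fibre. This makes $E^I_B$ a $K(G,1)$. Under these identifications the map $\Pi$ realizes the diagonal homomorphism $\Delta:G\to G\times_Q G$: on the source it is (up to homotopy) the identity on $E\simeq K(G,1)$, and it sends a point to the pair of its endpoints, inducing $g\mapsto(g,g)$ on fundamental groups.

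Having shown that $\Pi:E^I_B\to E\times_B E$ realizes $\Delta:G\to G\times_Q G$ in the sense of the realization definition, the conclusion is immediate from Lemma~\ref{lem:realize}: any two realizing maps have the same sectional category, so
\[
\TC[p:E\to B]=\secat(\Pi)=\secat(\Delta:G\to G\times_Q G)=\TC[\rho:G\tto Q].
\]
I expect the main obstacle to be the careful verification that $E^I_B$ is aspherical and that the source-evaluation $E^I_B\to E$ is a genuine homotopy equivalence rather than merely a fibrewise one over each component; this requires using path-connectedness of the fibres of $p$ (the $0$-connected hypothesis) to control $\pi_1$ and the higher homotopy groups of $E^I_B$, and is precisely the step where the restriction to epimorphisms (equivalently, $0$-connected fibrations) is essential.
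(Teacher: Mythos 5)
Your proposal is correct and follows essentially the same route as the paper: replace $E^I_B$ by $E$ via the constant-path embedding (the paper uses the midpoint map as homotopy inverse where you use source evaluation), identify $E\times_B E$ as a path-connected $K(G\times_Q G,1)$ (the paper via the Mayer--Vietoris sequence of the fibre sequence $\Omega B\to E\times_B E\to E\times E$, you via the long exact sequence of the self-pullback fibration), and conclude that the diagonal map realizes $\Delta:G\to G\times_Q G$ so that Lemma~\ref{lem:realize} and Lemma~\ref{lem:secatfacts} give the equality of sectional categories. One small correction: the homotopy equivalence $E\simeq E^I_B$ requires no asphericity of the fibres whatsoever (simply contract each fibrewise path toward an endpoint), so the $0$-connectedness hypothesis is not needed there but rather is what guarantees that $E\times_B E$ is path-connected with fundamental group exactly $G\times_Q G$.
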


\begin{proof}
By definition $\TC[p:E\to B]=\secat(\Pi:E^I_B\to E\times_B E)$. The map $h:E\to E^I_B$ which embeds $E$ as the constant paths has as homotopy inverse the map $m:E^I_B\to E$ which sends a path to its midpoint. Furthermore, the following diagram commutes, where $d:E\to E\times_B E$ is the diagonal embedding:
\[
\xymatrix{
E \ar[rr]^{h} \ar[dr]_{d} &  & E^I_B \ar[ld]^{\Pi} \\
 & E\times_B E
 }
 \]
By Lemma~\ref{lem:secatfacts}(d), we therefore have $\TC[p:E\to B]=\secat(d:E\to E\times_B E)$. The proof will be complete once we show that $d:E\to E\times_B E$ realizes the diagonal homomorphism $\Delta:G\to G\times_Q G$.

By assumption $E$ is a $K(G,1)$ and $B$ is a $K(Q,1)$. Let $X$ be the fibre of $p:E\to B$, which by assumption is path-connected. Note that $E\times_B E$ fibres over $E$ with fibre $X$, hence is path-connected. We examine the Mayer--Vietoris sequence associated to the fibre sequence $\Omega B\to E\times_B E \to E\times E$ (see for instance \cite[Corollary 2.2.3]{MayPonto}), in which the map (of pointed sets) $\pi_1(E)\times\pi_1(E)\to \pi_1(B)$ is given by $(g,h)\mapsto\rho(g)\cdot \rho(h)^{-1}$:
\[
\begin{tikzpicture}[descr/.style={fill=white,inner sep=1.5pt}]
        \matrix (m) [
            matrix of math nodes,
            row sep=1em,
            column sep=2.5em,
            text height=1.5ex, text depth=0.25ex
        ]
        { \cdots & \pi_i(E\times_B E) & \pi_i(E)\times\pi_i(E) & \pi_i(B) \\
                 & \mbox{}         &                 & \mbox{}         \\
            & \pi_2(E\times_B E) & \pi_2(E)\times\pi_2(E) & \pi_2(B) \\
            & \pi_1(E\times_B E) & \pi_1(E)\times\pi_1(E) & \pi_1(B) \\
               & \pi_0(E\times_B E) & \pi_0(E)\times\pi_0(E) & \pi_0(B)\\
        };

        \path[overlay,->, font=\scriptsize,>=latex]
        (m-1-1) edge (m-1-2)
        (m-1-2) edge (m-1-3)
        (m-1-3) edge (m-1-4)
        (m-1-4)  edge[out=355,in=175,dashed] (m-3-2)
        (m-3-2) edge (m-3-3)
        (m-3-3) edge (m-3-4)
        (m-3-4) edge[out=355,in=175] node[descr,yshift=0.3ex] {$\partial$} (m-4-2)
        (m-4-2) edge (m-4-3)
        (m-4-3) edge (m-4-4)
        (m-4-4) edge[out=355,in=175] node[descr,yshift=0.3ex] {$\partial$} (m-5-2)
        (m-5-2) edge (m-5-3)
        (m-5-3) edge (m-5-4);
\end{tikzpicture}
\]

This confirms that $E\times_B E$ is a $K(G\times_Q G,1)$, and identifies the inclusion induced map $\pi_1(E\times_B E)\to \pi_1(E\times E)$ with the subgroup inclusion $G\times_Q G\hookrightarrow G\times G$. From this it follows easily that $d:E\to E\times_B E$ realizes the diagonal $\Delta:G\to G\times_Q G$. 
\end{proof}

\begin{rem}
By Proposition~\ref{prop:diagonal}, any two $0$-connected fibrations $p:E\to B$ and $p':E'\to B'$ which realize $\rho:G\tto Q$ have the same parametrised topological complexity. This also follows directly from Proposition~\ref{prop:hinv}, by an argument similar to the proof of Lemma~\ref{lem:realize}. 
\end{rem}

Let $\rho:G\tto Q$ be an epimorphism with kernel $N\lhd G$. Then the fibre $X$ of any $0$-connected fibration $p:E\to B$ which realizes $\rho$ is a $K(N,1)$ space. We may therefore introduce the alternative notation $\TC_Q(N)$ for $\TC[\rho:G\tto Q]$, although this de-emphasizes the important role played by the extension $G$.

We end this section with some trivial observations relating the parametrised topological complexity of epimorphisms with the topological complexity of groups.

\begin{ex}
Let $\rho:G\tto 1$ be the epimorphism from a group $G$ to the trivial group. Then $\TC[\rho:G\to 1]=\TC(G)$.
\end{ex} 

\begin{lem}\label{lem:fibre}
Let $N=\ker(\rho:G\tto Q)$. Then $\TC(N)\leq \TC[\rho:G\tto Q]$.
\end{lem}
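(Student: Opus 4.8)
The plan is to realize $\rho$ by a $0$-connected fibration and then restrict attention to a single fibre, where the parametrised endpoint fibration degenerates to the ordinary endpoint fibration. First I would invoke the existence of a $0$-connected fibration $p\colon E\to B$ realizing $\rho\colon G\tto Q$; by \propref{prop:diagonal} this gives $\TC[\rho\colon G\tto Q]=\TC[p\colon E\to B]$. Since $N=\ker\rho$, the fibre $X=p^{-1}(b_0)$ over a basepoint $b_0\in B$ is a $K(N,1)$ space.

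Next I would pull $p$ back along the inclusion $\{b_0\}\hookrightarrow B$. The resulting fibration is precisely the fibre $q\colon X\to\{b_0\}$, and \lemref{lem:pullbacks} yields
\[
\TC[q\colon X\to\{b_0\}]\leq \TC[p\colon E\to B].
\]

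It then remains to identify $\TC[q\colon X\to\{b_0\}]$ with $\TC(X)=\TC(N)$, which is the only point requiring a small verification. Unwinding the definitions over a one-point base, the fibred product $X\times_{\{b_0\}}X$ is simply $X\times X$, and the fibrewise path space $X^I_{\{b_0\}}$ is the whole free path space $X^I$, since the condition $q(\gamma(t))=q(\gamma(0))$ is vacuous. Hence the parametrised endpoint fibration of $q$ coincides with the ordinary endpoint fibration $X^I\to X\times X$, whose sectional category is $\TC(X)$. Equivalently, one may note that $q$ realizes the trivial epimorphism $N\tto 1$ and invoke the Example above together with \propref{prop:diagonal}. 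Combining these,
\[
\TC(N)=\TC[q\colon X\to\{b_0\}]\leq \TC[p\colon E\to B]=\TC[\rho\colon G\tto Q].
\]

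I expect no serious obstacle here: the essential content is in choosing a $0$-connected (rather than merely surjective) realization, which forces the fibre to be aspherical with fundamental group exactly $N$, together with the harmless bookkeeping that parametrisation over a point is no parametrisation at all. An alternative, purely algebraic route would observe that $N\times N$ is a subgroup of $G\times_Q G$ and that the square relating $\Delta\colon N\to N\times N$ to $\Delta\colon G\to G\times_Q G$ is a pullback of groups; realizing the latter diagonal by a fibration and applying \lemref{lem:secatfacts}(b) gives the same bound, but verifying that the homotopy pullback stays aspherical with the correct fundamental group is marginally more delicate than the fibrewise argument above.
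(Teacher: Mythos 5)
Your proof is correct. The paper's own proof is a one-liner: it observes that the fibre $X$ of a realizing fibration is a $K(N,1)$ and then cites \cite[(4.4)]{CFW1}, which is precisely the statement that the topological complexity of the fibre bounds the parametrised topological complexity from below. What you have done is supply a self-contained derivation of that cited fact using only the paper's internal toolkit: Proposition~\ref{prop:diagonal} to pass from the algebraic definition to a $0$-connected realizing fibration, Lemma~\ref{lem:pullbacks} applied to the inclusion $\{b_0\}\hookrightarrow B$ to restrict to the fibre, and the elementary observation that over a one-point base the parametrised endpoint fibration $X^I_{\{b_0\}}\to X\times_{\{b_0\}}X$ is literally the ordinary endpoint fibration $X^I\to X\times X$, so that $\TC[q\colon X\to\{b_0\}]=\TC(X)=\TC(N)$. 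This is the same underlying idea as the cited result (restriction of the parametrised problem to a single fibre), so conceptually the routes coincide; the practical difference is that your argument makes the lemma independent of the external reference, at the cost of a few lines of bookkeeping. Your alternative algebraic sketch via $N\times N\leq G\times_Q G$ also works --- one checks $G\times_Q G=(N\times N)\Delta(G)$, so the relevant homotopy pullback is connected and aspherical with fundamental group $\Delta(G)\cap(N\times N)=\Delta(N)$ --- but as you say, it requires exactly the kind of double-coset verification that the fibrewise argument avoids.
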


\begin{proof}
Since the fibre $X$ of $p:E\to B$ is a $K(N,1)$ space, this is a special case of \cite[(4.4)]{CFW1}.
\end{proof}

\section{A lower bound}

In this section we establish a lower bound for the parametrised topological complexity of group epimorphisms, which generalizes the lower bound for the topological complexity of groups due to Grant--Lupton--Oprea \cite{GLO}. In particular, it depends only on the cohomological dimension of certain subgroups, and therefore circumvents cup-product calculations.

Recall that the cohomological dimension of a discrete group $\pi$, denoted $\cld(\pi)$, is the minimal length of a projective resolution of $\Z$ by $\Z\pi$-modules, or equivalently the largest $k$ such that $H^k(\pi;M)\neq0$ for some $\Z\pi$-module $M$. Following celebrated results of Eilenberg--Ganea \cite{EG}, Stallings \cite{Sta} and Swan \cite{Swa}, we have $\cld(\pi)=\cat(\pi)$, where the latter denotes the category of any $K(\pi,1)$ space.

\begin{thm}\label{thm:lower}
Let $\rho: G\tto Q$ be a group epimorphism with kernel $N$. Given subgroups $A,B\leq G$ such that $gAg^{-1}\cap B=\{1\}$ for all $g\in N$, we have
\[
\cld(A\times_Q B) \leq \TC[\rho:G\tto Q].
\]
Here $A\times_Q B=\{(a,b)\in A\times B \mid \rho(a)=\rho(b)\}$ is the fibred product of $\rho|_A:A\to Q$ and $\rho|_B:B\to Q$.
\end{thm}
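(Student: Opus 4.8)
The plan is to produce a lower bound for $\secat(\Delta: G\to G\times_Q G)$ by finding, inside $G\times_Q G$, a subgroup whose cohomological dimension is $\cld(A\times_Q B)$ and which behaves suitably with respect to the diagonal $\Delta(G)$. The strategy mirrors the Grant--Lupton--Oprea lower bound for $\TC(\pi)$: there one uses a \emph{cohomological} lower bound for sectional category, namely that $\secat(f)\ge \cld(K)$ whenever $K$ is a subgroup of the domain of the relevant cup-product/cohomology pairing that maps ``transversally'' to the base. Concretely, I would first work with a $0$-connected fibration $p:E\to B$ realizing $\rho$, so that by Proposition~\ref{prop:diagonal} we may compute $\TC[\rho:G\tto Q]=\secat(d:E\to E\times_B E)$, and by the identification in the proof of that proposition, $E\times_B E$ is a $K(G\times_Q G,1)$ with $\pi_1$ mapping to $G\times G$ via the subgroup inclusion.

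The key step is to exhibit a subspace (or a subgroup) on which the diagonal map is inessential in a controlled way. I would take $Y_A\times_B Y_B\subseteq E\times_B E$ realizing the fibred product $A\times_Q B\le G\times_Q G$, where $Y_A,Y_B$ are the covers of $E$ corresponding to $A,B\le G$. The sectional category lower bound I want is: if we can cover $E\times_B E$ by open sets on each of which $d$ admits a homotopy section, then restricting to the subgroup $A\times_Q B$ and using the cohomology of the pair, the number of sets must be at least $\cld(A\times_Q B)+1$. The role of the hypothesis $gAg^{-1}\cap B=\{1\}$ for all $g\in N$ is exactly to guarantee that the two ``legs'' $A$ and $B$ are sufficiently disjoint inside the fibre $N$, so that the restriction of the diagonal section to the corresponding subspace has no nonzero cohomology class surviving --- this is the algebraic avatar of the geometric condition that $A$ and $B$ never collide in the fibre direction. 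I would make this precise by showing that the restricted fibration $d^{-1}(Y_A\times_B Y_B)\to Y_A\times_B Y_B$ has a section-theoretic obstruction living in $H^*(A\times_Q B)$ that is nonzero in top degree $\cld(A\times_Q B)$.

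In more detail, I expect the mechanism to be the following. The fibre of $\Pi$ (equivalently of $d$) over $E\times_B E$ is $\Omega X\simeq \Omega K(N,1)$, and a homotopy section over an open set $U$ trivialises the associated principal-type obstruction there. Pulling back along the classifying map for $A\times_Q B$ and using the disjointness hypothesis, one checks that the fibrewise diagonal $Y_A\times_B Y_B \to (Y_A\times_B Y_B)$-fibration (the restriction of $\Pi$) has the property that its associated cohomology class in degree equal to $\cld(A\times_Q B)$ with coefficients in a suitable $\Z[A\times_Q B]$-module is nonzero, by a standard argument that the disjointness forces the corresponding ``difference'' class to be a generator. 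Since a space covered by $k+1$ open sets each admitting a homotopy section of $\Pi$ must kill all $(k+1)$-fold products of such classes, the existence of a nonzero class in degree $\cld(A\times_Q B)$ forces $\secat\ge \cld(A\times_Q B)$.

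The main obstacle will be establishing the nonvanishing of the relevant cohomology class under the hypothesis $gAg^{-1}\cap B=\{1\}$ for all $g\in N$; this is where the combinatorial condition on conjugates of $A$ and $B$ must be translated into a concrete statement about the restriction map in group cohomology of $A\times_Q B$. I anticipate that the cleanest route is to adapt the ``zero-divisor'' cup-length argument of \cite{GLO} verbatim but relative to the base $Q$: one identifies the kernel of the restriction $H^*(G\times_Q G)\to H^*(\Delta(G))$ with the parametrised analogue of the ideal of zero-divisors, and then shows the disjointness condition yields a nonzero product of length $\cld(A\times_Q B)$ inside the image of $H^*(A\times_Q B)$. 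Verifying that the module-theoretic top class is genuinely nonzero, rather than merely that the cohomological dimension bound is formally available, is the delicate point; everything else reduces to the standard machinery already packaged in Lemma~\ref{lem:secatfacts} and Proposition~\ref{prop:diagonal}.
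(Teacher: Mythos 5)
Your setup is the right one --- you pull back the parametrised endpoint fibration along a map $\psi\co Y\to E\times_B E$ realizing the inclusion $A\times_Q B\hookrightarrow G\times_Q G$ and invoke Lemma~\ref{lem:secatfacts}(b) --- but the engine you propose to run on it is not the one that works, and it contains an unfilled gap that you yourself flag. The paper's proof (following \cite[Theorem 1.1]{GLO}, whose method you misremember) involves no obstruction classes and no cup products whatsoever; indeed the stated purpose of this bound is to \emph{circumvent} cup-product calculations. Instead it uses $1$-dimensional category: since $Y$ is aspherical, $\cat(Y)=\cat_1(Y)$, so it suffices to show that every open set $U\subseteq Y$ over which the pulled-back fibration $q\co D\to Y$ admits a homotopy section is $1$-categorical, i.e.\ every composition $\phi\co S^1\to U\hookrightarrow Y$ is null-homotopic. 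This is a statement about conjugacy classes, not cohomology: $[\phi]$ corresponds to a conjugacy class $[(a,b)]$ in $A\times_Q B$, and factoring through $q$ forces $\psi_*[\phi]$ to lie in the image of $d_*\co[S^1,E]\to[S^1,E\times_B E]$, i.e.\ $(a,b)$ is conjugate in $G\times_Q G$ to a diagonal element. Writing $kak^{-1}=\ell b\ell^{-1}$ with $(k,\ell)\in G\times_Q G$, the element $g=\ell^{-1}k$ lies in $N$ (because $\rho(k)=\rho(\ell)$) and conjugates $a$ to $b$; the hypothesis $gAg^{-1}\cap B=\{1\}$ for $g\in N$ then gives $a=b=1$, so $\phi$ is null-homotopic. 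Consequently $\secat(q)\geq\cat_1(Y)=\cat(Y)=\cld(A\times_Q B)$, the last equality being Eilenberg--Ganea--Stallings--Swan.

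The gap in your version is precisely the step you defer: translating the conjugation-disjointness hypothesis into the nonvanishing of a product of classes in $\ker\bigl(H^*(E\times_B E)\to H^*(E)\bigr)$ restricted along $\psi$. No such translation is available, and this is not merely a presentational issue: knowing $\cld(A\times_Q B)=n$ does not furnish $n$ zero-divisor classes with nonzero product (top-degree classes in group cohomology need not decompose as products of lower-degree ones, especially with the local coefficients that are forced here), and the raison d'\^{e}tre of the GLO-style bound is exactly to obtain lower bounds in situations where zero-divisor cup-length arguments are unavailable or strictly weaker. Your outline, if pursued, would therefore stall at the point you call ``delicate''; the fix is to abandon the obstruction-theoretic mechanism entirely and argue on $\pi_1$-conjugacy classes via $\cat_1$ as above.
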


\begin{proof}
The proof follows closely that of \cite[Theorem 1.1]{GLO}.

Let $A,B\leq G$ be subgroups as in the statement, and let $p:E\to B$ be a fibration realizing $\rho:G\tto Q$. Let $Y$ be a $K(A\times_Q B,1)$ space, and let $\psi:Y\to E\times_B E$ be a map realizing the inclusion $A\times_Q B\hookrightarrow G\times_Q G$. Form the pull-back
\[
\xymatrix{
D \ar[d]_q \ar[r] &  E^I_B \ar[d]^\Pi \\
Y \ar[r]^-\psi & E\times_B E
}
\]
and observe that $\secat(q)\leq \secat(\Pi)=\TC[\rho:G\tto Q]$ by Lemma~\ref{lem:secatfacts}(b). We will show that under the given assumptions, $\cld(A\times_Q B)=\cat(Y)\leq \secat(q)$.

Recall that the \emph{$1$-dimensional category} of a space $Y$, denoted $\cat_1(Y)$, is the smallest $k$ such that $Y$ admits a cover by $1$-categorical open sets $U_0,\ldots , U_k$. An open set $U\subseteq Y$ is \emph{$1$-categorical} if every composition $L\to U\hookrightarrow Y$ where $L$ is a CW-complex with $\dim(L)\leq 1$ is null-homotopic. It is well-known that if $Y$ is a $K(\pi,1)$ space then $\cat_1(Y)=\cat(Y)$. We are therefore reduced to showing that $\secat(q)\geq \cat_1(Y)$.

Let $U\subseteq Y$ be an open set such that the inclusion $U\hookrightarrow Y$ factors through $q:D\to Y$. We must show that $U$ is $1$-categorical, which by \cite[Lemma 5.3]{GLO} is equivalent to showing that every composition $\phi: S^1\to U\hookrightarrow Y$ is null-homotopic. Applying the functor $[S^1,-]$ given by unbased homotopy classes of loops, we have the following diagram
\[
\xymatrix{
[S^1,D] \ar[d]_{q_*} \ar[r] & [S^1,E^I_B]\ar[d]_{\Pi_*} \ar[r]^{m_*} & [S^1,E]\ar[dl]^{d_*} \\
[S^1,Y] \ar[r]^-{\psi_*} & [S^1,E\times_B E] & 
}
\]
Recall that for connected pointed spaces there is a natural bijection between $[S^1,X]$ and the set of conjugacy classes in $\pi_1(X,x_0)$. Therefore $[\phi]\in [S^1,Y]$ corresponds to some conjugacy class $[(a,b)]$ in $A\times_Q B$. Since $[\phi]$ is in the image of $q_*$, by the above diagram $\psi_*[\phi]$ is in the image of $d_*$, which in terms of conjugacy classes implies that $(a,b)$ is conjugate in $G\times_Q G$ to some element of the diagonal subgroup $\Delta(G)$. It follows that there exists $(k,\ell)\in G\times_Q G$ such that $kak^{-1}=\ell b \ell^{-1}$. But then $g:=\ell^{-1}k\in N$ conjugates $a\in A$ to $b\in B$, which under our assumptions implies that $a=b=1$, and hence that $\phi$ is null-homotopic. 
\end{proof}

\begin{rem}
Theorem~\ref{thm:lower} specializes to \cite[Theorem 1.1]{GLO} in the case of the trivial epimorphism $\rho:G\tto 1$. For general epimorphisms, we have more flexibility in choosing the subgroups $A$ and $B$ (since we only need trivial intersections under conjugation by elements of the kernel) but the conclusion may be weaker (since $A\times_Q B\leq A\times B$ implies $\cld(A\times_Q B)\leq \cld(A\times B)$.)
\end{rem} 

\begin{ex}\label{ex:infinite}
Let $G=\Z\rtimes\Z/2$ be the infinite dihedral group, expressed as the semi-direct product of the sign representation of $\Z/2$ on $\Z$, and let $\rho:G\to Q=\Z/2$ be the projection. Note that $\TC(N)=\TC(\Z)=1$. We can show that $\TC_{\Z/2}(\Z)=\infty$ using Theorem~\ref{thm:lower}, as follows. 

We denote $\Z/2=\{1,\sigma\}$ multiplicatively and $\Z$ additively, so an arbitrary element of $G$ is of the form $(n,1)$ or $(n,\sigma)$ for $n\in\Z$. Define subgroups
\[
A=\{(0,1),(1,\sigma)\},\qquad B=\{(0,1),(0,\sigma)\}
\]
of $G$, both isomorphic to $\Z/2$. The calculation
\[
(n,1)(1,\sigma)(-n,1)=(1-2n,\sigma)\neq (0,\sigma)
\]
verifies that the assumptions of Theorem~\ref{thm:lower} are satisfied for these subgroups, and clearly $A\times_Q B$ is isomorphic to $\Z/2$. Hence $\infty=\cld(\Z/2)\leq \TC_{\Z/2}(\Z)$.

This example is realized topologically by the bundle
\[
\xymatrix{
S^1 \ar[r] & S^1\times_{\Z/2} S^\infty \ar[r] & \R P^\infty
}
\]
associated to the universal principal $\Z/2$-bundle $S^\infty\to \R P^\infty$ with fibre $S^1\subset \C$ acted on by conjugation. 
\end{ex}
   
\section{An upper bound}

We now turn our attention to upper bounds for $\TC[\rho:G\tto Q]$, and prove a generalisation of \cite[Proposition 3.7]{Grant}. Recall that $\Delta:G\to G\times_Q G$ denotes the diagonal homomorphism.

\begin{thm}\label{thm:upper}
Let $\rho:G\tto Q$ be a group epimorphism with kernel $N$. Let $H\lhd G$ be a normal subgroup such that $[H,N]=1$. Then $\Delta(H)$ is normal in $G\times_Q G$, and
$$
\TC[\rho:G\tto Q]\le \cld\left(\frac{G\times_Q G}{\Delta(H)}\right).
$$
\end{thm}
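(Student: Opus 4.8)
The plan is to establish the normality claim by a direct algebraic computation and then deduce the bound by applying the fibering Lemma~\ref{lem:secatfibering} to a suitable aspherical realisation, along the lines of the proof of \cite[Proposition 3.7]{Grant}.

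First I would verify that $\Delta(H)$ is normal in $G\times_Q G$. Given $(g_1,g_2)\in G\times_Q G$ and $h\in H$, conjugation gives
\[
(g_1,g_2)(h,h)(g_1,g_2)^{-1}=\big(g_1hg_1^{-1},\,g_2hg_2^{-1}\big),
\]
and since $H\lhd G$ both coordinates already lie in $H$. The essential point is that the two coordinates \emph{agree}: because $\rho(g_1)=\rho(g_2)$, the element $g_2^{-1}g_1$ lies in $N=\ker\rho$, and the hypothesis $[H,N]=1$ forces $g_2^{-1}g_1$ to commute with $h$, whence $g_1hg_1^{-1}=g_2hg_2^{-1}$ and the conjugate lies in $\Delta(H)$. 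This is the step where $[H,N]=1$ is genuinely used, and I expect it to be the conceptual crux; everything afterward is assembly.

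Next I would pass to aspherical spaces. Choose a $0$-connected fibration $p:E\to B$ realising $\rho$. By Proposition~\ref{prop:diagonal} the diagonal $d:E\to E\times_B E$ realises $\Delta:G\to G\times_Q G$, the space $E\times_B E$ is a $K(G\times_Q G,1)$, and $\TC[\rho:G\tto Q]=\secat(d)$. Writing $\bar{G}=(G\times_Q G)/\Delta(H)$, the short exact sequence $1\to\Delta(H)\to G\times_Q G\to\bar{G}\to 1$ (available thanks to the normality just proved) is realised by a fibration sequence $F\xrightarrow{\iota}E\times_B E\xrightarrow{q}Y$, where $Y=K(\bar{G},1)$ is connected and the fibre $F$ is a $K(\Delta(H),1)$ with $\iota$ realising the subgroup inclusion $\Delta(H)\hookrightarrow G\times_Q G$.

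Finally I would apply Lemma~\ref{lem:secatfibering} with the map $d$ in the role of $p$. This requires a map $s:F\to E$ with $d\circ s\simeq\iota$. Since $\Delta$ restricts to an isomorphism $H\cong\Delta(H)$, I may identify $F\simeq K(H,1)$ and take $s$ to realise the inclusion $H\hookrightarrow G$; then $d\circ s$ realises the restriction of $g\mapsto(g,g)$ to $H$, which is precisely the inclusion $\Delta(H)\hookrightarrow G\times_Q G$ realised by $\iota$, so $d\circ s\simeq\iota$ by Lemma~\ref{lem:realize}. The fibering lemma then yields
\[
\TC[\rho:G\tto Q]=\secat(d)\le\cat(Y)=\cat(\bar{G})=\cld(\bar{G}),
\]
the last equality being the Eilenberg--Ganea--Stallings--Swan identity $\cat=\cld$. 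Thus the only real work lies in the normality verification, with the topological portion reducing to Proposition~\ref{prop:diagonal}, Lemma~\ref{lem:secatfibering}, and the identification of category with cohomological dimension.
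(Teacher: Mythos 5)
Your proof is correct and follows essentially the same route as the paper: realise the extension $1\to H \xrightarrow{\Delta|_H} G\times_Q G \to W\to 1$ (where $W=(G\times_Q G)/\Delta(H)$) by a fibration sequence of aspherical spaces, note that the triangle over a realisation of $\Delta$ homotopy commutes, and apply Lemma~\ref{lem:secatfibering} together with $\cat=\cld$; the paper simply leaves the normality computation---your step genuinely using $[H,N]=1$---to the reader, and works with an arbitrary realisation of $\Delta$ rather than routing through Proposition~\ref{prop:diagonal}. One small citation point: the homotopy $d\circ s\simeq \iota$ follows from the bijection $\left[K(\pi,1),K(\pi',1)\right]_*\cong \Hom(\pi,\pi')$ stated at the start of Section~3, not from Lemma~\ref{lem:realize} itself, which only asserts existence of realisations and equality of their sectional categories.
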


\begin{proof}
The verification that $\Delta(H)$ is normal in $G\times_Q G$ is straightforward and left to the reader. Let $W$ denote the quotient. The diagram of groups
\[
\xymatrix{
 & & G\ar[d]^\Delta & & \\
1 \ar[r]  & H \ar@{^{(}->}[ur]\ar[r]^{\Delta|_H} & G\times_Q G \ar[r] & W \ar[r] & 1
}
\]
in which the row is an extension may be realised by a diagram of aspherical spaces
\[
\xymatrix{
& K(G,1) \ar[d]^-{f} & \\
K(H,1) \ar[r]^-{i} \ar[ur]  & K(G\times_Q G,1) \ar[r]^-{p} &  K(W,1)
}
\]
 in which the row is a fibration sequence. By Definition~\ref{def:pTChomo} and Lemma~\ref{lem:secatfibering}, we therefore have
 \[
 \TC[\rho:G\tto Q]=\secat(f)\leq \cat(K(W,1))=\cld(W).
 \]
 \end{proof}

\begin{cor}\label{cor:central}
If the extension
\[
\xymatrix{
1 \ar[r] & N \ar[r] & G \ar[r]^\rho & Q \ar[r] & 1
}
\]
is central, then $\TC[\rho:G\tto Q]=\cld(N)$.
\end{cor}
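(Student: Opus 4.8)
The plan is to prove the two inequalities $\TC[\rho:G\tto Q]\leq\cld(N)$ and $\cld(N)\leq\TC[\rho:G\tto Q]$ separately, using the upper bound of \thmref{thm:upper} and the lower bound of \thmref{thm:lower} respectively. The only place the centrality hypothesis will really be needed is in the upper bound; the lower bound holds for an arbitrary epimorphism.

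For the upper bound I would apply \thmref{thm:upper} with the maximal possible choice $H=G$. Centrality of the extension means $N\leq Z(G)$, whence $[G,N]=1$, so that $H=G$ is a legitimate (normal) choice and $\Delta(G)$ is normal in $G\times_Q G$. It then remains to identify the quotient $(G\times_Q G)/\Delta(G)$. The natural candidate is the map $\mu:G\times_Q G\to N$ defined by $\mu(g,h)=gh^{-1}$: for $(g,h)\in G\times_Q G$ one has $\rho(gh^{-1})=\rho(g)\rho(h)^{-1}=1$, so indeed $gh^{-1}\in N$. This $\mu$ is surjective (since $\mu(n,1)=n$ for $n\in N$) with kernel exactly $\Delta(G)$, so that $(G\times_Q G)/\Delta(G)\cong N$, and \thmref{thm:upper} yields $\TC[\rho:G\tto Q]\leq\cld(N)$.

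For the lower bound I would invoke \thmref{thm:lower} with the subgroups $A=N$ and $B=\{1\}$. Since $B$ is trivial the intersection hypothesis $gAg^{-1}\cap B=\{1\}$ holds vacuously for every $g\in N$, while the fibred product $A\times_Q B=\{(a,1)\mid a\in N,\ \rho(a)=1\}$ is isomorphic to $N$. \thmref{thm:lower} therefore gives $\cld(N)=\cld(A\times_Q B)\leq\TC[\rho:G\tto Q]$. (This inequality also follows from \lemref{lem:fibre} combined with the standard bound $\cat\leq\TC$, since $\cld(N)=\cat(K(N,1))\leq\TC(N)$; I would mention this only as an alternative.)

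The crux of the argument, and the step I expect to be the main obstacle, is the verification that $\mu$ is a \emph{homomorphism} — and this is precisely where centrality is forced. For $(g_1,h_1),(g_2,h_2)\in G\times_Q G$ we have $\mu\big((g_1,h_1)(g_2,h_2)\big)=g_1g_2h_2^{-1}h_1^{-1}$, which agrees with $\mu(g_1,h_1)\mu(g_2,h_2)=g_1h_1^{-1}g_2h_2^{-1}$ exactly because the element $g_2h_2^{-1}\in N$ is central and hence commutes with $h_1^{-1}$. Without centrality $\mu$ fails to be multiplicative, the clean identification of the quotient breaks down, and one no longer expects equality (rather than a mere inequality); thus the sharp value $\cld(N)$ is genuinely special to central extensions.
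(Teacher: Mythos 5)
Your proof is correct. The upper bound is exactly the paper's argument: take $H=G$ in Theorem~\ref{thm:upper} and identify $(G\times_Q G)/\Delta(G)\cong N$ via $[g,h]\mapsto gh^{-1}$; the paper leaves this verification to the reader (``one checks''), and your explicit observation that multiplicativity of this map is precisely where centrality enters is the right one. Where you genuinely diverge is the lower bound. The paper argues $\cld(N)=\TC(N)\leq\TC[\rho:G\tto Q]$, combining Lemma~\ref{lem:fibre} with the fact that $\TC(N)=\cld(N)$ for abelian $N$ (a $K(N,1)$ with $N$ abelian is an H-space, so its topological complexity equals its category). You instead apply Theorem~\ref{thm:lower} with $A=N$ and $B=\{1\}$, for which the conjugation hypothesis is vacuous and $A\times_Q B\cong N$. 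Both are valid; your route stays entirely inside the paper's own machinery and shows the stronger statement that $\cld(N)\leq\TC[\rho:G\tto Q]$ holds for an \emph{arbitrary} epimorphism with kernel $N$, no centrality or abelianness needed, whereas the paper's route avoids the heavier Theorem~\ref{thm:lower} at the cost of invoking the external fact about $\TC$ of abelian groups. Your parenthetical alternative via $\cat\leq\TC$ and Lemma~\ref{lem:fibre} is also sound, and is essentially the paper's argument with the equality $\TC(N)=\cld(N)$ relaxed to the inequality that is all one actually needs.
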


\begin{proof}
If $N$ is central in $G$ then we may take $H=G$ in Theorem ~\ref{thm:upper}. One checks that the map
\[
\frac{G\times_Q G}{\Delta(G)}\to N,\qquad [g,h]\mapsto gh^{-1}
\]
is a group isomorphism. Therefore $\TC[\rho:G\tto Q]\leq \cld(N)$. On the other hand we have
\[
\cld(N)=\TC(N)\leq \TC[\rho:G\tto Q],
\]
where the equality follows from the fact that $N$ is abelian and the inequality is Lemma~\ref{lem:fibre}.
\end{proof}

\section{Fadell--Neuwirth fibrations}

Recall that for a topological space $X$ and integer $m\ge1$, the \emph{$m$-th ordered configuration space} of $X$ is the space
\[
F(X,m):=\{(x_1,\ldots, x_m)\in X^m \mid i\neq j \implies x_i\neq x_j\},
\]
topologised as a subspace of the Cartesian power $X^m$. Fadell and Neuwirth \cite{FadellNeuwirth} showed that when $X$ is a manifold and $n\ge0$, the projection maps
\[
p:F(X,m+n)\to F(X,m),\qquad (x_1,\ldots , x_{m+n})\mapsto (x_1,\ldots , x_m)
\]
are locally trivial fibrations, and used this to study the homotopy type of configuration spaces. These so-called \emph{Fadell-Neuwirth fibrations} are of potential relevance in robotics, as calculations of their parametrised topological complexity give information about the motion planning problem for $n$ agents moving in the space $X$, avoiding collisions with each other and with $m$ obstacles whose positions may not be know in advance. For $X=\R^d$ the Euclidean space of dimension $d$, these calculations were carried out in the papers \cite{CFW1,CFW2}, where it was shown that 
\[
\TC[p:F(\R^d,m+n)\to F(\R^d,m)]=\begin{cases} 2n+m-1 & d\ge3\mbox{ odd}, \\ 2n+m-2 & d\ge2\mbox{ even.} \end{cases}
\]
The case $d$ even turns out to be significantly more difficult, with the cup-length calculation in \cite{CFW2} running to several pages. Here we note that for $d=2$ the spaces involved are all aspherical, and so we may apply the methods developed in this paper to give a shorter proof. 

We first recall some facts about duality groups. Recall that a group $\pi$ is a \emph{duality group of dimension $n$} if there exists a $\Z\pi$-module $C$ and an element $e\in H_n(\pi;C)$ such that cap product with $e$ induces an isomorphism
\[
-\cap e: H^k(\pi;A)\stackrel{\simeq}{\to} H_{n-k}(\pi;A\otimes C)
\]
for all $k$ and all $\Z\pi$-modules $A$. It follows that $\cld(\pi)=n$.

\begin{lem}[Bieri--Eckmann \cite{BE}]\label{lem:duality}
\begin{enumerate}
\item If $N$ and $Q$ are duality groups of dimensions $r$ and $s$ respectively, which fit into an extension of groups
\[
\xymatrix{
1 \ar[r] & N \ar[r] & \pi \ar[r] & Q\ar[r] & 1,
}
\]
then $\pi$ is a duality group of dimension $r+s$.
\item Non-trivial free groups are duality groups of dimension $1$.
\end{enumerate}
\end{lem}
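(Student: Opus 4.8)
The plan is to reduce everything to the homological characterization of duality groups, also due to Bieri–Eckmann: a group $\pi$ is a duality group of dimension $n$ if and only if $\pi$ is of type $\mathrm{FP}$, the cohomology $H^i(\pi;\Z\pi)$ vanishes for $i\neq n$, and $H^n(\pi;\Z\pi)$ is torsion-free as an abelian group; in that case the dualizing module may be taken to be $C=H^n(\pi;\Z\pi)$. I would first record this equivalence (citing \cite{BE}), since it converts the cap-product condition of the definition into conditions that behave well under extensions. I treat finitely generated free groups in part (2), this being the case needed in the application and the only case in which a free group is a duality group.

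Part (1). First, type $\mathrm{FP}$ is closed under extensions, so $\pi$ is of type $\mathrm{FP}$. Next I would compute $H^*(N;\Z\pi)$. Restricted to $N$, the module $\Z\pi$ is free: choosing a transversal for the right cosets of $N$ gives an isomorphism $\Z\pi\cong\bigoplus \Z N$ of left $\Z N$-modules. Because $N$ is of type $\mathrm{FP}$, its cohomology commutes with arbitrary direct sums, so $H^i(N;\Z\pi)\cong\bigoplus H^i(N;\Z N)$, which vanishes for $i\neq r$ and equals $\bigoplus D$ for $i=r$, where $D=H^r(N;\Z N)$ is the (torsion-free) dualizing module of $N$. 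Tracking the residual right $\pi$-action, which factors through $Q$ since inner automorphisms act trivially on cohomology, identifies $H^r(N;\Z\pi)$ as the induced $\Z Q$-module $\Z Q\otimes_\Z D$.

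Now the Lyndon–Hochschild–Serre spectral sequence $H^p(Q;H^q(N;\Z\pi))\Rightarrow H^{p+q}(\pi;\Z\pi)$ is concentrated in the single row $q=r$, hence collapses to give $H^{p+r}(\pi;\Z\pi)\cong H^p(Q;\Z Q\otimes_\Z D)$. Since $Q$ is of type $\mathrm{FP}$ and $D$ is a flat (equivalently, torsion-free) abelian group, computing with a finite projective resolution of $\Z$ over $\Z Q$ and using that $\Hom$ out of a finitely generated projective commutes with $-\otimes_\Z D$ yields $H^p(Q;\Z Q\otimes_\Z D)\cong H^p(Q;\Z Q)\otimes_\Z D$. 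As $Q$ is a duality group of dimension $s$, the right-hand side vanishes for $p\neq s$ and equals $H^s(Q;\Z Q)\otimes_\Z D$ for $p=s$. Therefore $H^i(\pi;\Z\pi)=0$ for $i\neq r+s$, while $H^{r+s}(\pi;\Z\pi)$ is a tensor product over $\Z$ of two torsion-free abelian groups, hence torsion-free. By the characterization, $\pi$ is a duality group of dimension $r+s$.

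Part (2). A nontrivial finitely generated free group $F$ of rank $k$ has the finite graph $\bigvee_k S^1$ as a $K(F,1)$, so it is of type $\mathrm{FP}$ with $\cld(F)=1$. Applying $\Hom_{\Z F}(-,\Z F)$ to the free resolution $0\to(\Z F)^{k}\to\Z F\to\Z\to0$ gives $H^0(F;\Z F)=0$ (as $F$ is infinite), $H^i(F;\Z F)=0$ for $i\geq2$, and exhibits $H^1(F;\Z F)$ as the cokernel of an injection of free $\Z F$-modules; this is the well-known free abelian ``module of ends'' of $F$, in particular torsion-free, so by the characterization $F$ is a duality group of dimension $1$. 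The delicate step in the whole argument is the identification of $H^r(N;\Z\pi)$ with the induced $\Z Q$-module $\Z Q\otimes_\Z D$: both the bookkeeping of the $Q$-action arising from right translation of cosets (with its twist by the $Q$-action on $D$, removed by the standard untwisting isomorphism for induced modules) and the repeated appeal to the finiteness hypotheses to commute cohomology past infinite direct sums and past $-\otimes_\Z D$. Everything else is formal once the homological characterization is in hand.
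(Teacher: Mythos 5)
Your proposal is correct, but there is nothing in the paper to compare it against: the paper states this lemma as a quotation of classical results of Bieri--Eckmann \cite{BE} and gives no proof. What you have written is essentially a reconstruction of Bieri--Eckmann's original argument: reduce to the homological characterization of duality groups (type $\mathrm{FP}$, $H^i(\pi;\Z\pi)=0$ for $i\neq n$, and $H^n(\pi;\Z\pi)$ torsion-free), then run the Lyndon--Hochschild--Serre spectral sequence with $\Z\pi$ coefficients, which collapses to the row $q=r$, identified as the induced module $\Z Q\otimes_\Z D$. The two finiteness appeals --- commuting $H^*(N;-)$ past the decomposition $\Z\pi\cong\bigoplus\Z N$, and commuting $H^*(Q;-)$ past $-\otimes_\Z D$ using flatness of the torsion-free group $D$ --- are exactly where the $\mathrm{FP}$ hypotheses are needed, and you have placed them correctly. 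Two small points deserve attention. First, in part (2) the clause ``cokernel of an injection of free $\Z F$-modules'' does not by itself yield torsion-freeness ($\Z\xrightarrow{2}\Z$ is such an injection with torsion cokernel); the claim genuinely rests on the identification with the module of ends, or on the direct observation that if $(g_i-1)x\equiv 0 \bmod n$ for a basis element $g_i$, then $x\equiv 0\bmod n$, since left multiplication by an infinite-order element has no nonzero invariants in $(\Z/n)[F]$. Second, to conclude that the dimension is exactly $r+s$ one also needs $H^{r+s}(\pi;\Z\pi)\neq 0$; this is automatic because $H^s(Q;\Z Q)\otimes_\Z D$ is a tensor product of nonzero torsion-free (hence flat) abelian groups, but it merits a sentence. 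Finally, your restriction of part (2) to finitely generated free groups is not a loss of generality but a necessary correction to the paper's unqualified wording: a duality group is of type $\mathrm{FP}$, hence finitely generated, and this is also the only case used in the paper's application to braid groups.
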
 

\begin{thm}[\cite{CFW2}]
Let $m\ge2$ and $n\ge1$, and let $p:F(\C,m+n)\to F(\C,m)$ denote the Fadell-Neuwirth fibration with fibre $F(\C_m,n)$, where $\C_m:=\C\setminus\{1,2,\ldots ,m\}$ is the $m$-th punctured plane. Then
\[
\TC[p:F(\C,m+n)\to F(\C,m)]=2n+m-2.
\]
\end{thm}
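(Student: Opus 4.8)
The plan is to identify $p$ with an epimorphism of pure braid groups and then squeeze $\TC[p]$ between matching bounds coming from Theorems~\ref{thm:upper} and~\ref{thm:lower}, computing every cohomological dimension through the duality calculus of Lemma~\ref{lem:duality}. Since $\C$ is a surface, all of the configuration spaces in sight are aspherical, so $p$ is a $0$-connected fibration realizing an epimorphism $\rho:G\tto Q$ with $G=\pi_1 F(\C,m+n)$, $Q=\pi_1 F(\C,m)$ and kernel $N=\pi_1 F(\C_m,n)$; by Proposition~\ref{prop:diagonal} it then suffices to compute $\TC[\rho:G\tto Q]$. Iterating the Fadell--Neuwirth fibrations expresses each of $N$, $Q$, $G$ as an iterated extension with non-trivial free fibre groups, so Lemma~\ref{lem:duality} shows they are duality groups with $\cld(N)=n$, $\cld(Q)=m-1$ and $\cld(G)=m+n-1$.

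For the upper bound I would take $H=Z(G)\cong\Z$, the infinite cyclic centre of the pure braid group generated by the full twist. It is central, hence normal and satisfies $[H,N]=1$, so Theorem~\ref{thm:upper} gives $\TC[\rho:G\tto Q]\le\cld(W)$ with $W=(G\times_Q G)/\Delta(Z(G))$. The first-coordinate projection realizes $G\times_Q G$ as an extension $1\to N\to G\times_Q G\to G\to 1$, so by Lemma~\ref{lem:duality} it is a duality group of dimension $2n+m-1$. Dividing by $\Delta(Z(G))$ produces an extension $1\to N\to W\to G/Z(G)\to 1$. Here $G/Z(G)$ is the fundamental group of $F(\C,m+n)/\mathrm{Aff}(\C)$, on which the affine group acts freely (as $m+n\ge 2$), the circle of rotations tracing out the full twist; normalising the first two points to $0$ and $1$ identifies this quotient with $F(\C_2,m+n-2)$, so $G/Z(G)$ is a duality group of dimension $m+n-2$. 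One further application of Lemma~\ref{lem:duality} shows $W$ is a duality group of dimension $n+(m+n-2)=2n+m-2$, and hence $\TC[\rho:G\tto Q]\le 2n+m-2$.

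For the matching lower bound I would invoke Theorem~\ref{thm:lower}, seeking subgroups $A,B\le G$ with $gAg^{-1}\cap B=\{1\}$ for all $g\in N$ and $\cld(A\times_Q B)=2n+m-2$. Exploiting the splitting of $\rho$ afforded by a section of $p$, I would take $A=N_A\rtimes Q$ and $B=N_B\rtimes Q$ for $Q$-invariant subgroups $N_A,N_B\le N$, so that $A\times_Q B\cong(N_A\times N_B)\rtimes Q$; matching dimensions through Lemma~\ref{lem:duality} forces $\cld(N_A)+\cld(N_B)=2n-1$, and since each summand is at most $\cld(N)=n$ this pins the split to $\{n,n-1\}$. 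Thus one of the two must be a proper duality subgroup of $N=\pi_1 F(\C_m,n)$ of full cohomological dimension $n$. A short computation reduces the conjugation hypothesis to a separation statement internal to the fibre group, of exactly the Grant--Lupton--Oprea type used for $\TC(N)$, to be verified through the $N$-conjugation-invariant data recording how the $n$ agents link the $m$ obstacles.

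The main obstacle is precisely this last construction: the dimension requirement forces $N_A$ to be a proper subgroup of $N$ of full dimension $n$, yet it must remain conjugately separated from $N_B$ under the entire normal subgroup $N$. The transparent upper bound coming from the centre indicates that $2n+m-2$ should be sharp, and the duality bookkeeping keeps the numerology clean; but exhibiting explicit geometric pure-braid subgroups that simultaneously realize the extremal split and the $N$-separation is where the genuine work lies, and is the analogue, in the parametrised setting, of the multi-page cup-length estimate of \cite{CFW2} that the present approach is designed to replace.
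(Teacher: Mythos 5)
Your upper bound is correct and is precisely the paper's argument: take $H=\mathcal{Z}$ the infinite cyclic centre of $P_{m+n}$, apply Theorem~\ref{thm:upper}, quotient the pulled-back extension $1\to \overline{P}_{n,m}\to P_{m+n}\times_{P_m}P_{m+n}\to P_{m+n}\to 1$ by $\Delta(\mathcal{Z})$ to get $1\to \overline{P}_{n,m}\to W\to P_{m+n}/\mathcal{Z}\to 1$, identify $P_{m+n}/\mathcal{Z}$ with the fundamental group of a configuration space of the twice-punctured plane (dimension $m+n-2$), and conclude $\cld(W)=2n+m-2$ by Bieri--Eckmann duality. No issues there.

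The lower bound, however, has a genuine gap, in two respects. First, your template is internally inconsistent: if $A=N_A\rtimes s(Q)$ and $B=N_B\rtimes s(Q)$ are built from a single section $s$ of $\rho$ (which is what ``the splitting of $\rho$ afforded by a section of $p$'' provides), then $A\cap B\supseteq s(Q)\cong P_m\neq\{1\}$, so the hypothesis of Theorem~\ref{thm:lower} already fails at $g=1\in N$. Insisting that both $A$ and $B$ surject onto $Q$ is what creates this obstruction (and what forces your awkward split $\cld(N_A)+\cld(N_B)=2n-1$ with a proper full-dimensional subgroup of $N$); nothing in Theorem~\ref{thm:lower} requires surjectivity. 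Second, even granting a repaired template, you explicitly leave the construction of $N_A,N_B$ and the verification of the separation condition as open (``where the genuine work lies''), so the bound $\TC[\rho]\geq 2n+m-2$ is simply not established.

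The paper's proof avoids both problems by importing the subgroups from the proof of \cite[Proposition 3.3]{GLO} wholesale: $A$ is the free abelian group of rank $m+n-1$ generated by the braids $\alpha_j$ passing the $j$-th strand behind the later strands, and $B\cong P_{m+n-1}$ is the subgroup of braids on the first $m+n-1$ strands. Crucially, $\rho|_A$ is \emph{not} surjective, so no dimension is wasted on a common copy of $Q$; and the separation property $gAg^{-1}\cap B=\{1\}$ was already proved in \cite{GLO} via linking numbers for \emph{all} $g\in P_{m+n}$, hence in particular for $g\in N$ --- no new geometric input is needed. The fibred product is then computed by pulling back the extension $1\to\overline{P}_{n-1,m}\to B\to P_m\to 1$ along $\rho|_A$, giving $1\to\overline{P}_{n-1,m}\to A\times_{P_m}B\to A\to 1$ and thus $\cld(A\times_{P_m}B)=(n-1)+(m+n-1)=2n+m-2$ by Lemma~\ref{lem:duality}. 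The point you missed is the one made in the remark following Theorem~\ref{thm:lower}: since the parametrised hypothesis only requires trivial intersections under conjugation by the kernel, subgroups already known to work for the unparametrised lower bound on $\TC(P_{m+n})$ can be reused directly.
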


\begin{proof}
Let $P_r=\pi_1(F(\C,r))$ denote the pure braid group on $r$ strands. Then $p:F(\C,m+n)\to F(\C,m)$ realizes the epimorphism $\rho:P_{m+n}\to P_m$ which deletes the last $n$ strands. The kernel of $\rho$ is $\overline{P}_{n,m}:=\pi_1(F(\C_m,n))$, the $n$-strand braid group of the $m$-th punctured plane. All of these groups are iterated semi-direct products of free groups \cite{CS}, hence are duality groups of dimension equal to the number of free factors. In particular, $\cld(P_r)=r-1$ and $\cld(\overline{P}_{n,m})=n$.

Let $A$ and $B$ be the subgroups of $P_{m+n}$ arising in the proof of \cite[Proposition 3.3]{GLO}. Namely, $A$ is free abelian of rank $m+n-1$ generated by the braids $\alpha_j$ which pass the $j$-th strand over and behind the last $m+n-j$ strands and back to its starting position, for $j=1,2,\ldots,m+n-1$; and $B$ is the image of the embedding $P_{m+n-1}\hookrightarrow P_{m+n}$ as the first $m+n-1$ strands. It is shown in \cite{GLO} using linking numbers that $gAg^{-1}\cap B=\{1\}$ for all $g\in P_{m+n}$, so $A$ and $B$ satisfy the assumptions of Theorem \ref{thm:lower}.

The group $B$ fits in an extension
\[
\xymatrix{
1\ar[r] & \overline{P}_{n-1,m} \ar[r] & B \ar[r]^-{\rho|_B} & P_m \ar[r] & 1.
}
\]
Pulling back this extension along the map $\rho|_A: A\to P_{m}$ produces an extension  
\[
\xymatrix{
1\ar[r] & \overline{P}_{n-1,m} \ar[r] & A\times_{P_m} B \ar[r] & A \ar[r] & 1.
}
\]
Now, the groups $\overline{P}_{n-1,m}$ and $A$ are duality groups, of respective dimensions $n-1$ and $m+n-1$. Hence by Theorem~\ref{thm:lower} and Lemma~\ref{lem:duality},
\[
\TC[\rho:P_{m+n}\to P_m]\geq\cld(A\times_{P_m} B)=(n-1)+(m+n-1)=2n+m-2.
\]

Now let $\mathcal{Z}\leq P_{m+n}$ denote the centre, an infinite cyclic group. By Theorem \ref{thm:upper} we have
$$
\TC[\rho:P_{m+n}\to P_m]\leq \cld\left(\frac{P_{m+n}\times_{P_m} P_{m+n}}{\Delta(\mathcal{Z})}\right).
$$
Pulling back the extension
\[
\xymatrix{
1\ar[r] & \overline{P}_{n,m} \ar[r] & P_{m+n} \ar[r]^-{\rho} & P_m \ar[r] & 1
}
\]
by $\rho:P_{m+n}\to P_m$ gives an extension
\[
\xymatrix{
1 \ar[r] & \overline{P}_{n,m} \ar[r] & P_{m+n}\times_{P_m} P_{m+n} \ar[r] & P_{m+n} \ar[r] & 1.
}
\]
Taking the quotient by $\mathcal{Z}$ in each of the latter groups gives rise to an extension
 \[
\xymatrix{
1 \ar[r] & \overline{P}_{n,m} \ar[r] & \dfrac{P_{m+n}\times_{P_m} P_{m+n}}{\Delta(\mathcal{Z})} \ar[r] & \dfrac{P_{m+n}}{\mathcal{Z}} \ar[r] & 1
}
\]
in which the kernel and quotient are duality groups of dimensions $n$ and $m+n-2$ respectively (the latter follows from a splitting $P_{m+n}\cong \overline{P}_{2,m+n-2}\times\mathcal{Z}$). Hence by Lemma~\ref{lem:duality} the middle group is a duality group of dimension $2n+m-2$, which gives the desired upper bound.
\end{proof}

\end{document}